\newtheorem{thm}{Theorem}[section]
\newtheorem{prop}[thm]{Proposition}
\numberwithin{equation}{section}
\theoremstyle{definition}
\newtheorem{defin}[thm]{Definition}
\newtheorem{rem}[thm]{Remark}
\newcommand{\bx}{\bar{x}}
\newcommand{\et}{\mathbb{E}}
\newcommand{\ee}{\mathbb{E}}
\title[Stochastic control on the half-line]{Stochastic exit-time control on the half-line over a finite horizon}
\author[D. Zawisza]{Dariusz Zawisza}
\address{\noindent Dariusz Zawisza, \newline \indent
Jagiellonian University \newline \indent Faculty of Mathematics and Computer Science \newline \indent  Institute of Mathematics \newline \indent    {\L}ojasiewicza  6 \newline \indent 30-348 Krak{\'o}w, Poland }
\email{dariusz.zawisza@uj.edu.pl}
\subjclass[2020]{ 93E20; 35K58; 60H30}
\date{\today}
\begin{document}

\begin{abstract}
We consider a finite-time stochastic drift control problem with the assumption that the control is bounded and the system is controlled until the state process leaves the half-line. Assuming general conditions, it is proved that the resulting parabolic Hamilton--Jacobi--Bellman equation has a classical solution. In fact, we consider an even more general family of semilinear equations, which might be helpful in solving other control or game problems. Not only is the existence result proved, but also a recursive procedure for finding a solution resulting from a fixed-point argument is provided. 
\end{abstract}


\keywords{Exit-time control, Cauchy-Dirichlet problem, Hamilton Jacobi Bellman equation, optimal dividend problem, uncertain time horizon}

\maketitle

\section{Introduction}

The fundamental problem in the theory of exit--time stochastic control is to find an optimal strategy that maximizes (or minimizes) the expected  reward (or cost) accumulated up to the first exit time of the controlled stochastic process from a given domain. We consider a one-dimensional controlled process and define as exit domain the half-line (this is an unbounded domain). In this case, the exit time can be interpreted, for example, as a time of ruin (in actuarial problems), a default (in the credit risk), an extinction time (in the optimal harvesting problem) or finally the ending of the epidemy (in epidemiological studies). In our study, we concentrate on the Hamilton--Jacobi--Bellman (HJB) equation and the Markov feedback policies. There are still not many research papers dedicated to this equation on an unbounded domain.

The most popular method of finding a solution to the HJB equation is viscosity theory (e.g. Fleming and Soner \cite[Chapter V]{FS}). However, it is well-known that the viscosity theory is not useful for determining optimal strategies for the stochastic control functional. Therefore, the main objective of the paper is to prove (under very general conditions) that the associated equation admits a classical solution (class $\mathcal{C}^{2,1}$). The existence of such a solution facilitates many numerical methods and allows one to prove that the Markov feedback policies are indeed optimal. We stress the fact that to prove the main theorem, we use a fixed point type method and present a recursive algorithm, which in the stochastic control framework is usually called the gradient iteration algorithm (it is the continuous-time analogue of the value iteration algorithm), which allows us to prove the uniform convergence of the approximating sequence together with the first derivative on the entire space $(0,+\infty) \times [0,T)$. This is meaningful from the point of view of the feedback strategy convergence because the uniform convergence maintains stability of the algorithm, and consequently is important from the practitioners' point of view, especially in the financial and actuarial industry.
  
More precisely, our paper uses contractive arguments with exponential time-weighted norms (Bielecki's method \cite{Bielecki}), which have recently gained a lot of attention in the stochastic control theory (and its applications). We present a line of reasoning inspired by: Becherer and Schweizer \cite{Becherer}, Bychowska and Leszczy\'{n}ski \cite{Bychowska}, Ph.D. thesis (in Polish) of Zawisza \cite{Zawisza2}, Zawisza \cite{Zawisza1}. In all these papers, a fixed time horizon case (the whole $\mathbb{R}^{n}$ as a domain for the equation) was considered. There are already other papers which address the existence result for semi-linear or HJB equations on entire $\mathbb{R}^{n}$ with the conditions similar to ours (e.g. Addona \cite{A}, Addona et al. \cite{Addona}, Pardoux and Peng \cite{Pardoux}, Rubio \cite{Rubio1}) and even using similar methods. However, we should stress the fact that the extension of the method from the fixed time horizon to the random exit time is not trivial because the estimates of the hitting time   
have to be proved and the boundary behaviour of a diffusion has to be revealed. In addition, our solution is tailor-made for the one-dimensional case only and is not easily extendible to the multidimensional case. 

When it comes to the literature on exit-time stochastic control, it is focused more on exit from a bounded domain and on an infinite-horizon framework and integrates elliptic equations with a BSDE perspective (see, for example, Buckdahn and Nie \cite{Buckdahn} or Royer \cite{Royer} and references therein). We recommend also the books of Fleming and Soner \cite{FS} and Krylov \cite{Krylov} for the classical approach to bounded domain problems. In the unbounded domain parabolic case, we are aware of a recent result of Calvia et al. \cite{Calvia} who solve a stochastic control problem on the half-space of a general Hilbert space. As the primary instrument, they use mild and strong $\mathcal{K}$--solutions. Our result is merely one-dimensional, but at the same time contains topics and cases that are not included in \cite{Calvia}:

\begin{enumerate}
\item First, we show how to handle a one-dimensional model with a non-trivial  inhomogeneous boundary condition and a non-trivial diffusion coefficient, which cannot be done under the framework presented in \cite{Calvia}. Note that in-homogeneous boundary condition is crucial for solving many practical problems (for instance, risk sensitive problems).
  
\item Secondly, we focus on the classical solutions, which is a smaller class of functions in comparison to the class of mild solutions and, to achieve our main result, we introduce a different approach from the one suggested in \cite{Calvia} which allows us to prove the uniform convergence of the recursive sequence on the entire space $(0,+\infty)\times[0,T)$. 
\end{enumerate}

There is also a branch of literature connected with so-called finite fuel problem (e.g. Rokhlin and Mironenko \cite{Rokhlin}) where the framework is sufficiently comprehensive to  encompass the formulation of our problem as well. 
However, we are not aware of any findings concerning classical solutions or the fixed point iteration methods.

The remainder of this paper is structured as follows. In Section 2, we introduce some notational conventions. Section 3 contains a description of the problem.  The main result (Theorem \ref{main}) and its proof are presented in Section 4. Section 5 discusses the approach to dealing with the Lipschitz continuity of the expected terminated hitting time for diffusions, which is crucial for applicability of our main theorem. 

\section{Problem description} 
It is high time to outline the main problem which we are interested in. We use standard notation from SDE theory, together with conventional definitions of partial derivatives, norms, and probabilistic frameworks (see Appendix for more details).
 We consider the HJB equation of the form 
\begin{multline}  \label{first:eq} D_{t}u+ \tfrac{\sigma^{2}(x,t)}{2} D^{2}_{x} u  +\max_{\alpha \in A}  \bigl(b(x,t,\alpha) D_{x}u  +  h(x,t,\alpha) u  + l(x,t,\alpha)\bigr) =0,  \\ (x,t) \in (0,+\infty) \times [0,T)
\end{multline}
with the boundary condition 
\[u(x,t)=\beta(x,t), \quad (x,t) \in \partial_{p} \left((0,+\infty) \times [0,T) \right),\] where $A \subset \mathbb{R}^{m}$ is a compact set and 
\begin{gather*}
b,h,l:\mathbb{R}\times [0,T]\times A \to \mathbb{R},\quad \sigma: \mathbb{R}\times [0,T] \to \mathbb{R}, \\ \beta: \partial_{p} \left((0,+\infty) \times [0,T) \right)\to \mathbb{R}
\end{gather*}
are continuous functions with further regularity assumptions to be specified later on.
The symbol $\partial_{p}$ denotes the parabolic boundary, i.e.,
\[
 \partial_{p} \left((0,+\infty) \times [0,T) \right)= ([0,+\infty) \times \{T\}) \cup (\{0\} \times [0,T]).
\] 

We emphasize that equation \eqref{first:eq} is only semilinear and we do not assume $\alpha$--dependence for the coefficient $\sigma$. We believe some results are available for the general equation, but the methodology we would like to present works only for semilinear equations. Besides, our results are sufficient to address the problems arising from our practical motivations.     

It should be noticed that HJB equation \eqref{first:eq} is naturally linked with a stochastic control problem. Namely, we can  assume that the controller has at his disposal a family $\{\mathcal{A}_{t}\}_{t \geq 0}$, where $\mathcal{A}_{t}$ is an admissible control set, that is, the set of all progressively measurable processes $\alpha=\{\alpha_{s}\}_{s \geq t}$ taking values in the compact set $A$. Finally, let the functions $b$ and $\sigma$ be such that the initial value problem 
\[
\begin{cases}
dX_{s}^{\alpha}= b(X_{s}^{\alpha},s,\alpha_{s})\;ds + \sigma(X_{s}^{\alpha},s) \; dW_{s}{,} \\ 
X_{s}^{\alpha} = x,
\end{cases}
\]
admits a unique strong solution, which we denote by $\{X_{s}^{\alpha}(x,t)\}_{s \geq t}$. The controller's aim is to maximize the reward function
\begin{multline} \label{control_problem}
\mathcal{J}^{\alpha} (x,t) := \ee \biggl[ \int_{t}^{T \wedge \tau^{\alpha}(x,t)} e^{\int_{t}^{s} h(X_{k}^{\alpha}(x,t),k,\alpha_{k})\; dk } l(X_{s}^{\alpha}(x,t),s,\alpha_{s})\; ds \\ + e^{\int_{t}^{T \wedge \tau^{\alpha}(x,t)} h(X_{k}^{\alpha}(x,t),k,\alpha_{k})\; dk }\beta(X^{\alpha}_{T \wedge \tau^{\alpha}(x,t)}(x,t) ,T \wedge \tau^{\alpha}(x,t))\biggr]
\end{multline}
with respect to an admissible control set $\mathcal{A}_{t}$, where
\[
\tau^{\alpha}(x,t):= \inf\{s \geq t |\; X_{s}^{\alpha} (x,t)\leq 0\}, \quad x \geq 0.
\]
The function $l$ represents the running reward, $\beta$ -- the terminal reward, $h$ -- the discount rate and the stopping time $\tau^{\alpha}$ represents the moment of termination.

Let the function $V$ denote the controller value function, that is, $V(x,t):=\sup_{\alpha \in \mathcal{A}_{t}} \mathcal{J}^{\alpha}(x,t)$. Once we prove that \eqref{first:eq} admits a bounded classical solution $u$, it is only a matter of applying the standard verification results (e.g. Fleming and Soner \cite[Theorem 3.1, Chapter IV]{FS}) to find a solution to the above stochastic control problem and prove that $u(x,t)=V(x,t)$.

In our paper, we treat equation \eqref{first:eq} even in greater generality. Namely, it should be noticed that equation \eqref{first:eq} can be written in the form 
\[
\begin{aligned} 
&D_{t} u+\tfrac{\sigma^{2}(x,t)}{2}D_{x}^{2}u+ H_{max}(D_{x}u,u,x,t)=0, \; (x,t) \in (0,+\infty) \times [0,T),\\
 &u(x,t)=\beta(x,t),  \qquad (x,t) \in \partial_{p} \left( (0,+\infty) \times [0,T) \right),    
\end{aligned}\]
where
\[H_{max}(p,u,x,t):= \max_{\alpha \in A}  \left(b(x,t,\alpha)p +  h(x,t,\alpha) u+ l(x,t,\alpha)\right).\]
 Therefore, considering other potential applications of our results, we have decided to postpone  specifying regularity assumptions on the functions $b$, $h$, and $l$ to the end of  Section 4, and to first address the general problem of finding a classical solution to
 \begin{equation} \label{second}
\begin{aligned}
&D_{t} u + \tfrac{\sigma^{2}(x,t)}{2}  D_{x}^{2} u + H(D_{x}u, u, x, t) = 0, \; (x,t) \in (0,+\infty) \times [0,T), \\
&u(x,t) = \beta(x,t),  \qquad (x,t) \in \partial_{p} \left( (0,+\infty) \times [0,T) \right),
\end{aligned}
\end{equation} 
where
\begin{gather*}
\sigma:\mathbb{R} \times [0,T] \to \mathbb{R}, \quad H:\mathbb{R} \times \mathbb{R} \times \mathbb{R} \times [0,T] \to \mathbb{R}, \\ \beta: \partial_{p} \left((0,+\infty) \times [0,T) \right)\to \mathbb{R}
\end{gather*}
are general functions with the following regularity assumptions.

\begin{itemize}
\item[{\bf A1)}]
The function $\sigma$ is bounded, bounded away from zero and Lipschitz continuous, i.e., there exists a constant $L_{\sigma}>0$ such that for all $x,\bar{x} \in \mathbb{R}$, $t,\bar{t} \in [0,T]$
\[
|\sigma(x,t)-\sigma(\bar{x},\bar{t})|\leq L_{\sigma} \left(|x-\bar{x}|+ |t-\bar{t}| \right).
\]

 \item[{\bf A2)}] The function $\beta$ is bounded and Lipschitz continuous, i.e., there exists a constant $L_{\beta}>0$ such that for all $(x,t), (\bar{x},\bar{t}) \in \partial_{p} \left((0,+\infty) \times [0,T) \right)$ 
 \[
 |\beta(x,t)-\beta(\bar{x},\bar{t})|\leq L_{\beta}\left(|x-\bar{x}|+ |t-\bar{t}| \right).
 \]
 
 \item[{\bf A3)}] The function $H$ is Lipschitz continuous on compact subsets, i.e., for each compact set $U \subset \mathbb{R}^{3} \times [0,T]$ there is a constant $L_{U}$ such that
for all  $(p,u,x,t)$, $(\bar{p},\bar{u},\bar{x},\bar{t}) \in U$  we have
\[
|H(p,u,x,t)-H(\bar{p},\bar{u},\bar{x},\bar{t})| \leq L_{U} \left(|p-\bar{p}| +|u-\bar{u}|+|x-\bar{x}|+|t-\bar{t}|\right).
\]
 Moreover, there exists a constant $K>0$ such that for all $(p,u,x,t)$, $(\bar{p},\bar{u},x,t) \in \mathbb{R}^{3}\times [0,T]$ we have
\begin{equation}\label{warH1}
\begin{aligned}
&|H(p,u,x,t)| \leq K \left(1+|u|+|p|\right),\\
&|H(p,u,x,t)-H(\bar{p},\bar{u},x,t)| \leq K \left(|u-\bar{u}| + |p - \bar{p}|\right). 
\end{aligned}
\end{equation}

 \item[{\bf A4)}]There exists a constant $L>0$ such that for all $(x,t),(\bar{x},t) \in [0,+\infty) \times [0,T]$
 \begin{equation} \label{property:L}
\left| \ee (\tau(x,t) \wedge T) - \ee (\tau(\bar{x},t) \wedge T) \right | \leq L|x-\bar{x}|,
\end{equation}
where
\[ \tau (x, t): = \inf \{s \geq t | \; X_{s}(x,t)\leq 0\}
\]
and 
\[
dX_{s}=\sigma(X_{s},s)\; dW_{s}, \quad X_{t}=x.
\]
\end{itemize}

Condition \textbf{A1)} guarantees the well-posedness and essential properties of the solutions to the stochastic differential equations and linear parabolic equations studied in this paper. Assumption \textbf{A2)} provides the regularity of the boundary data, which is important (together with \textbf{A1)} and \textbf{A4)}) for obtaining a global gradient bound for the solution to our PDE. Assumption \textbf{A3)} is fundamental for establishing the well-posedness and regularity of solutions to our nonlinear PDE.

In Section~5, we present a proof of the implication \textbf{A1)} $\Rightarrow$ \textbf{A4)} in the homogeneous case, under the additional assumption that $\sigma \in C^{1}(\mathbb{R})$ is bounded, bounded away from zero, and its derivative $D_{x}\sigma$ is bounded and locally Lipschitz continuous. Under assumption \textbf{A1)}, the existence of a bounded classical solution to the problem
\[
\begin{cases}
D_t u + \tfrac{\sigma^2(x,t)}{2}  D_x^2 u + 1 = 0, & \quad (x,t) \in (0,+\infty) \times [0,T), \\
u(x,t) = 0, & \quad (x,t) \in \partial_p \left( (0,+\infty) \times [0,T) \right),
\end{cases}
\]
implies that condition \textbf{A4)} is equivalent to the boundedness of the derivative \( D_x u \) of this solution. This follows directly from the Mean-Value Theorem and the Feynman--Kac representation (see Proposition~\ref{Feynman}):
\[
u(x,t) = \mathbb{E}[\tau(x,t) \wedge T] - t,
\]
where \[ \tau(x,t) := \inf\{s \geq t|\; X_s(x,t) \leq 0\}.\]

As was mentioned in the Introduction, an additional advantage of our work is that it provides a procedure for generating an approximating sequence that converges to the solution. In subsequent pages, we employ a fixed-point argument to guarantee relatively fast uniform convergence of the sequence on the entire space.

\section{Main result}

We start this section by formulating our main theorem.
\begin{thm} \label{main} Assume that conditions {A1)}--{A4)} are satisfied. Then there exists a classical solution  $u \in \mathcal{C}^{2,1}((0,+\infty) \times [0,T)) \cap \mathcal{C}([0,+\infty) \times [0,T])$ to 
\eqref{second}, which in addition is bounded together with $D_{x} u$.
\end{thm}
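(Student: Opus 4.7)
The plan is a Picard-type iteration based on linearization, designed to contract in a Bielecki-type exponentially weighted norm that controls both the function and its spatial derivative. Concretely, given any admissible test function $v$ in
\[X:=\{v\in\mathcal{C}([0,+\infty)\times[0,T]) : v=\beta \text{ on } \partial_p,\ D_x v \text{ exists, bounded and continuous on } (0,+\infty)\times[0,T)\},\]
I would freeze the nonlinearity by setting $f_v(x,t):=H(D_x v(x,t),v(x,t),x,t)$ and define $\Phi(v):=u$, the unique bounded classical solution of the \emph{linear} Cauchy--Dirichlet problem
\[D_t u+\tfrac{\sigma^{2}(x,t)}{2}D_x^{2} u+f_v(x,t)=0, \qquad u|_{\partial_p}=\beta.\]
Existence and regularity for this linear problem follow from standard interior Schauder theory under A1) and the Lipschitz regularity of $f_v$; it also admits the Feynman--Kac representation (cf.\ Proposition~\ref{Feynman})
\[u(x,t)=\ee\!\left[\int_{t}^{T\wedge\tau(x,t)} f_v(X_s(x,t),s)\,ds+\beta\bigl(X_{T\wedge\tau(x,t)}(x,t),T\wedge\tau(x,t)\bigr)\right].\]

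The core a priori work is to show that $\Phi$ maps bounded sets of $X$ into bounded sets quantitatively. The sup bound $\|\Phi(v)\|_\infty$ follows from the linear growth bound \eqref{warH1} together with a Gronwall argument applied to the probabilistic representation. The crucial ingredient is a \emph{global} gradient bound: $|u(x,t)-u(\bar x,t)|$ is controlled from the representation by three contributions — the Lipschitz dependence of the flow $x\mapsto X_s(x,t)$ on its initial condition (a consequence of A1)), the Lipschitz constant $L$ of $x\mapsto\ee[\tau(x,t)\wedge T]$ supplied by A4), and the Lipschitz/bounded character of $\beta$ and $f_v$ from A2) and A3). This yields a Lipschitz constant for $u(\cdot,t)$ depending only on the data plus an affine function of $\|v\|_\infty+\|D_x v\|_\infty$.

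Equipping $X$ with the Bielecki norm
\[\|v\|_\lambda:=\sup_{(x,t)\in(0,+\infty)\times[0,T)}e^{-\lambda(T-t)}\bigl(|v(x,t)|+|D_x v(x,t)|\bigr),\]
the global Lipschitz inequality $|H(p_1,u_1,x,t)-H(p_2,u_2,x,t)|\le K(|p_1-p_2|+|u_1-u_2|)$ from A3), combined with the above gradient estimate, yields
\[\|\Phi(v_1)-\Phi(v_2)\|_\lambda\le\frac{C(K,L,L_\sigma,T)}{\lambda}\|v_1-v_2\|_\lambda.\]
Choosing $\lambda$ large enough produces a strict contraction; its unique fixed point $u$ is the sought bounded solution with bounded $D_x u$. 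Interior parabolic Schauder estimates applied once more at the fixed point promote $u$ to class $\mathcal{C}^{2,1}((0,+\infty)\times[0,T))$, while continuity up to the parabolic boundary is inherited from the linear theory.

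The main obstacle will be the global gradient estimate for $\Phi$. Because $D_x u$ must be controlled uniformly on the entire half-line up to (but not including) the corner $(0,T)$, and because pathwise differentiation of the Feynman--Kac formula in $x$ is delicate owing to the $x$-dependence of the exit time $\tau(x,t)$, the strategy must bypass any derivative of $\tau$ and proceed by a direct \emph{difference} estimate in which A4) does the heavy lifting. Ensuring that all constants depend only on the data (and not on $v$), so that the iteration closes uniformly, is the central technical point, and it is precisely what forces the paper to single out A4) as a hypothesis rather than an easy consequence of A1).
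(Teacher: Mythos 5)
Your overall architecture coincides with the paper's: freeze the nonlinearity, solve the linear Cauchy--Dirichlet problem, represent the solution by Feynman--Kac, and run a Picard iteration that contracts in a Bielecki norm controlling both $u$ and $D_xu$, with A4) used to handle the $x$-dependence of the exit time by a difference (not differentiation) argument. That is exactly the paper's strategy, and your identification of the global gradient bound as the central obstacle is correct.

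However, there is a genuine gap precisely at that central point. Your contraction estimate $\|\Phi(v_1)-\Phi(v_2)\|_\lambda\le \tfrac{C}{\lambda}\|v_1-v_2\|_\lambda$ requires a \emph{smoothing} estimate for the linear problem: the gradient of the solution with source $f=f_{v_1}-f_{v_2}$ must be bounded by the weighted \emph{sup norm} of $f$ alone (times a factor vanishing as $\lambda\to\infty$). The mechanism you sketch --- Lipschitz dependence of the flow $x\mapsto X_s(x,t)$ on its initial condition combined with the ``Lipschitz/bounded character of $f_v$'' --- does not deliver this: the Lipschitz constant of $f_v=H(D_xv,v,\cdot,\cdot)$ involves the modulus of continuity of $D_xv$, which is not controlled by the Bielecki norm, so the resulting constants would depend on $v$ beyond $\|v\|_\lambda$ and the iteration would not close. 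A pathwise estimate of $\ee\int_t^{T\wedge\tau}\bigl(f(X_s(x,t),s)-f(X_s(\bar x,t),s)\bigr)ds$ for merely bounded $f$ cannot be linear in $|x-\bar x|$ without invoking parabolic smoothing. The paper supplies the missing ingredient in two steps: first, gradient bounds on the fundamental solution $|D_x\Gamma|\le C(s-t)^{-1}\exp(-c|y-x|^2/(s-t))$ give $\|D_xv_f\|^{0,\mathbb{R}}_{\kappa}\le M\kappa^{-1/3}\|f\|^{0,\mathbb{R}}_{\kappa}$ for the whole-line Cauchy problem (Proposition~\ref{norm:estimates:Cauchy}); second, an It\^o-formula identity together with the one-dimensional comparison theorem (which gives $\ee|X_{T\wedge\tau}(x,t)-X_{T\wedge\tau}(\bar x,t)|=|x-\bar x|$ exactly) transfers this bound to the half-line exit problem, with A4) absorbing the term coming from the mismatch of the two exit times (Proposition~\ref{norm:estimates}). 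Note also that even with this machinery the achievable rate for the gradient component is $\kappa^{-1/3}$ (or at best $\kappa^{-1/2}$), not $\lambda^{-1}$, because of the $(s-t)^{-1/2}$ singularity; this does not affect the existence of a contraction, but your stated rate is not attainable by this route. Without the smoothing estimate and the comparison-theorem/It\^o transfer, the proof as proposed does not close.
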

In Remark \ref{unique} we also address the uniqueness of the solution to \eqref{second}. The proof of the above-mentioned theorem is given at the end of this section. 
It is conducted by applying the fixed point method which involves constructing a recursive sequence consisting of solutions to an appropriate linear equation of the form
\begin{equation} \label{firsteq}
 \begin{aligned}
  &D_{t}u+ \tfrac{\sigma^{2}(x,t)}{2}D^{2}_{x}u + f(x,t)=0,  \; &(x,t) \in (0,+\infty) \times [0,T), \\
&u(x,t)=\beta(x,t), \; &(x,t) \in  \partial_{p} \left((0,+\infty) \times [0,T)\right).
\end{aligned}
\end{equation}
The sequence is further shown to converge to the solution of equation \eqref{second}.
So, first, we present some preparatory results on the solution to \eqref{firsteq}.

Let $\mathcal{C}_{b}^{1,0}$ \label{page} be the space of all bounded and continuous functions on the set $[0,+\infty) \times [0,T)$ for which the first derivative wrt. the variable $x$ exists on the set $(0,+\infty) \times [0,T)$ and is continuous and bounded. The space is endowed with the family of norms parametrised by $\kappa>0$
\[
\|u\|_{\kappa}: = \sup_{(x,t) \in [0,+\infty) \times [0,T)} e^{-\kappa(T-t)} |u(x,t)| +  \sup_{(x,t) \in (0,+\infty) \times [0,T)} e^{-\kappa(T-t)} |D _{x} u(x,t)|.
\]
Note that the space $\mathcal{C}_{b}^{1,0}$ together with $\|\cdot\|_{\kappa}$ forms a Banach space. The time-weighted norms are well known in the theory of differential equations as excellent tools to construct contractions on the entire interval $[0,T]$ in the existence proof. Our inspiration to use such norm comes from Becherer and Schweizer \cite{Becherer}, who employ time--weighted norms, but without the gradient component.

We also introduce the subspace $\mathcal{C}^{1+,0+}_{b,loc} \subset \mathcal{C}_{b}^{1,0}$ consisting of all functions $u \in \mathcal{C}_{b}^{1,0}$ for which the functions $u$ and $D_{x} u(x,t)$ are H\"older continuous on compact subsets of $(0,+\infty)\times [0,T)$ and globally bounded. There is no need to introduce a separate norm for $\mathcal{C}^{1+,0+}_{b,loc}$.
Furthermore, we use the space $\mathcal{C}_{b}^{0}$ consisting of all functions that are continuous on the set $(0,+\infty)\times [0,T)$ and bounded. This space is considered together with the norms
\begin{align*}
\|u\|^{0}:&= \sup_{(x,t) \in [0,+\infty)  \times [0,T)}  |u(x,t)|, \\ \|u\|_{\kappa}^{0}:&= \sup_{(x,t) \in [0,+\infty)  \times [0,T)} e^{-\kappa(T-t)} |u(x,t)|, \quad \kappa>0.
\end{align*}
If the function $u$ is continuous and bounded on the set $\mathbb{R} \times [0,T]$ we use the following notation
\begin{align*}
\|u\|^{0,\mathbb{R}}:&= \sup_{(x,t) \in \mathbb{R}  \times [0,T]}  |u(x,t)|, \\ \|u\|_{\kappa}^{0,\mathbb{R}}:&= \sup_{(x,t) \in \mathbb{R}  \times [0,T]} e^{-\kappa(T-t)} |u(x,t)|,\quad \kappa>0.
\end{align*}

In the proposition below we show the existence of the solution to \eqref{firsteq}. The proof includes the original idea for establishing  continuity up to the boundary for the limit of a sequence of functions.
\begin{prop} \label{approx}
Let conditions A1) and A2) be satisfied and let the function $f:(0,+\infty) \times [0,T) \to \mathbb{R}$ be H\"{o}lder continuous on compact subsets and bounded. Then there exists a bounded classical solution $u \in \mathcal{C}^{2,1}((0,+\infty) \times [0,T)) \cap \mathcal{C}([0,+\infty) \times [0,T])$ to \eqref{firsteq}, such that for every compact set $G \times I\subset (0,+\infty) \times [0,T)$ there exists $l \in (0,1]$ such that
$\| u\|_{\mathcal{C}^{2+l,1+l/2}(G \times I)} < +\infty$.
\end{prop}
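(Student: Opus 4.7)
The plan is to construct the solution by exhausting $(0,+\infty)\times[0,T)$ with bounded parabolic cylinders, solving a classical Dirichlet--Cauchy problem on each by standard Schauder theory, passing to the limit via interior Schauder estimates, and finally identifying the limit with its Feynman--Kac representation so that continuity at the parabolic boundary can be recovered from non-degeneracy of $\sigma$. Concretely, for each $n\in\mathbb{N}$, I work on $Q_{n}=(0,n)\times[0,T)$ with boundary data equal to $\beta$ on $\{0\}\times[0,T]$ and on $[0,n]\times\{T\}$, and a bounded Lipschitz extension $\widetilde\beta_{n}$ (uniformly bounded in $n$) on $\{n\}\times[0,T]$ chosen compatibly at the corners. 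Assumption \textbf{A1)} gives uniform parabolicity and Lipschitz continuity of $\sigma$, assumption \textbf{A2)} gives Lipschitz continuous and bounded boundary data, and $f$ is H\"older on compact subsets and bounded; together these hypotheses fall under classical parabolic Schauder theory on bounded cylinders (e.g.\ Ladyzhenskaya--Solonnikov--Ural'tseva, Ch.~IV, Thm.~5.2, or Lieberman), producing a unique classical solution $u_{n}\in\mathcal{C}^{2+l,1+l/2}_{\mathrm{loc}}(Q_{n})\cap\mathcal{C}(\overline{Q_{n}})$. The standard maximum principle gives the uniform bound $\|u_{n}\|_{\infty}\le\|\beta\|_{\infty}+T\|f\|_{\infty}$.

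\textbf{Interior limit and Feynman--Kac identification.} Interior parabolic Schauder estimates deliver uniform $\mathcal{C}^{2+l,1+l/2}$-bounds on every compact set $G\times I\subset(0,+\infty)\times[0,T)$, independently of $n$. A diagonal Arzel\`a--Ascoli extraction then produces a subsequence converging in $\mathcal{C}^{2,1}_{\mathrm{loc}}$ to some $u\in\mathcal{C}^{2,1}((0,+\infty)\times[0,T))$ which is globally bounded, solves \eqref{firsteq} in the interior, and satisfies the required local Hölder bound $\|u\|_{\mathcal{C}^{2+l,1+l/2}(G\times I)}<\infty$. Applying It\^o's formula to $u_{n}(X_{s},s)$ on $[t,T\wedge\tau_{n}]$ with $\tau_{n}(x,t)=\inf\{s\ge t:X_{s}(x,t)\notin(0,n)\}$ gives the Feynman--Kac representation
\[
u_{n}(x,t)=\mathbb{E}\Bigl[\int_{t}^{T\wedge\tau_{n}(x,t)}\!f(X_{s}(x,t),s)\,ds+u_{n}\bigl(X_{T\wedge\tau_{n}}(x,t),T\wedge\tau_{n}\bigr)\Bigr].
\]
Because \textbf{A1)} prevents explosion, $\tau_{n}\wedge T\to\tau\wedge T$ almost surely as $n\to\infty$, so bounded convergence together with uniform convergence of $u_{n}$ on compact sets identifies $u$ with
\[
u(x,t)=\mathbb{E}\Bigl[\int_{t}^{T\wedge\tau(x,t)}\!f(X_{s}(x,t),s)\,ds+\beta\bigl(X_{T\wedge\tau(x,t)}(x,t),T\wedge\tau(x,t)\bigr)\Bigr].
\]

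\textbf{Boundary continuity --- the main obstacle.} The interior compactness argument carries no information about the boundary, so continuity up to $\partial_{p}((0,+\infty)\times[0,T))$ must be established separately; this is the delicate step that the excerpt flags as containing the ``original idea''. Using the Feynman--Kac representation for $u$, continuity at the top boundary $[0,+\infty)\times\{T\}$ follows from $T\wedge\tau(x,t)-t\to 0$ as $t\to T^{-}$, combined with continuity and boundedness of $\beta$ and $f$. For the lateral boundary $\{0\}\times[0,T)$ the key observation is that $\sigma$ is bounded below by a positive constant, hence the diffusion $X$ started at small $x>0$ hits $0$ in a short time: more precisely, for each fixed $t\in[0,T)$ one has $\tau(x,t)\downarrow t$ in probability as $x\to 0^{+}$ (by comparison with a Brownian motion with nontrivial volatility, or by the hitting-time estimates developed in Section~5). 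Combining this with the Lipschitz bound on $\beta$ from \textbf{A2)} and with boundedness of $f$ yields $u(x,t)\to\beta(0,t)$ as $x\to 0^{+}$; corner continuity at $(0,T)$ is handled by estimating both contributions simultaneously. The hardest point is precisely this lateral continuity, because it is here that non-degeneracy of $\sigma$ has to be used quantitatively to transport the interior Schauder limit all the way to the boundary $x=0$.
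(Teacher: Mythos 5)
Your overall architecture (approximate, apply a linear existence theorem, pass to the limit with interior Schauder estimates and Arzel\`a--Ascoli, identify the limit by Feynman--Kac, and recover boundary continuity from hitting-time estimates) parallels the paper's, but you approximate the \emph{domain} while the paper approximates the \emph{data}: the paper keeps the unbounded half-line throughout, mollifies $f$ into globally Lipschitz functions $f_n$ (after a cutoff), and invokes Rubio's Cauchy--Dirichlet theorem on $(0,+\infty)\times[0,T)$ directly, so no artificial boundary at $x=n$ ever appears and no auxiliary data $\widetilde\beta_n$ has to be invented or later argued away. Your exhaustion by cylinders $Q_n$ is a legitimate alternative and buys independence from Rubio's unbounded-domain result, at the cost of having to check that the contribution of the artificial boundary vanishes in the limit (which your $\tau_n\wedge T\to\tau\wedge T$ plus bounded convergence does handle). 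For the lateral boundary, your quantitative comparison of the hitting time with that of a Brownian motion of volocity bounded below is essentially Step I of the paper's Section~5 and is sound (note you need $\mathbb{E}(T\wedge\tau(x,t)-t)\to 0$, i.e.\ $L^1$ convergence, but this follows from convergence in probability since the variable is bounded by $T$); the paper instead gets $\lim\mathbb{E}(T\wedge\tau(x,t)-t)=0$ for free from the continuity up to the boundary of Rubio's solution to the problem with $f\equiv 1$, $\beta\equiv 0$.

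The one genuine soft spot is your very first step. Ladyzhenskaya--Solonnikov--Ural'tseva, Ch.~IV, Thm.~5.2 (and its analogues in Lieberman) produces solutions in $\mathcal{C}^{2+l,1+l/2}(\overline{Q_n})$ and requires $f\in\mathcal{C}^{l,l/2}(\overline{Q_n})$, boundary data in a matching H\"older class, and first-order compatibility conditions at the corners. Here $f$ is only H\"older continuous on compact subsets of the \emph{open} set $(0,+\infty)\times[0,T)$ (it may oscillate badly near $x=0$ and $t=T$), and $\beta$ is merely Lipschitz, so the theorem as cited does not apply; to get a solution that is classical in the interior but only continuous up to $\partial_p Q_n$ you must either run an approximation of $f$ (and of the boundary data) inside each cylinder and pass to the limit there too --- which is exactly the mollification step the paper performs globally, and which also supplies the uniform local H\"older bounds on $f_n$ needed for the interior estimates --- or cite a barrier-based existence result for continuous boundary data with locally H\"older $f$. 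As written, the existence of $u_n\in\mathcal{C}^{2+l,1+l/2}_{\mathrm{loc}}(Q_n)\cap\mathcal{C}(\overline{Q_n})$ is asserted rather than proved; once that step is repaired, the rest of your argument goes through.
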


\begin{proof}
We would like to use already existing results on linear equations provided by Rubio \cite{Rubio}. In fact we do not fully exploit the scope of Rubio's result, as we consider only equations with bounded coefficients, which is only special case of  Rubio's more general framework that accommodates unbounded coefficients. 

To use the result, we should first approximate $f$ using a sequence of Lipschitz continuous functions. The definition of the sequence is adapted to our setting from Evans \cite[Appendix C.5]{Evans}. The remainder of the proof is our own original contribution.

Let us consider first a test function 
\begin{equation} \label{test_function}
\eta(x,t):=\begin{cases}
Ce^{\frac{1}{|(x,t)|^{2}-1}}, & \text{if} \; |(x,t)| <1,  \\
0, &\text{otherwise},
\end{cases}
\end{equation}
where the constant  $C$ is such that 
\begin{equation} \label{integrate_one}
\int_{\mathbb{R}^{2}}\eta(z) \; dz=1, \quad \text{($z\in \mathbb{R}^{2}$ is a shorthand notation for $(x,t)$)}.
\end{equation}
We may define a family 
\[
\eta_{\varepsilon}(x,t):= \frac{1}{\varepsilon^2}{\eta}\left(\frac{x}{\varepsilon},\frac{t}{\varepsilon}\right).
\]
The definition of the function $f$ can be extended on the entire $\mathbb{R} \times \mathbb{R}$ in the following way
$f(x,t):=f(x,0), \; t \in [-1,0)$ and $f=0$ outside of the set $(0,+\infty) \times [-1,T)$. Now, we choose an increasing sequence $(R_n,\; n \in \mathbb{N})$, $R_n \to +\infty$ as $n \to +\infty$ and define
\[
\psi_{n} (x,t):=\begin{cases}
f(x,t), & \text{if} \; (x,t) \in (0,R_{n}) \times [-1,T),\\
0, &\text{otherwise}
\end{cases}
\]
and
\begin{align*}
f_{n}(z):&= \int_{\mathbb{R}^{2}} \eta_{\varepsilon_{n}}(z-\zeta) \psi_{n}(\zeta) \; d \zeta = \int_{\mathbb{R}^{2}} \eta_{\varepsilon_{n}}(\zeta) \psi_{n}(z-\zeta) \; d \zeta \\ &=\int_{(0,R_{n}) \times [-1,T)} \eta_{\varepsilon_{n}}(z-\zeta) \psi_{n}(\zeta) \; d \zeta=\int_{B(0,\varepsilon_{n})} \eta_{\varepsilon_{n}}(\zeta) \psi_{n}(z-\zeta) \; d \zeta,
\end{align*}
where $\varepsilon_{n}:=\frac{1}{n}$ and  $B(0,\varepsilon_{n})$ denotes the support of the function  $\eta_{\varepsilon_{n}}$ (a closed ball with the center in $0$ and the radius equals $\varepsilon_{n}$).
Notice that
\begin{enumerate}
\item[i)] $f_{n}$ is uniformly bounded (because $f$ is bounded and condition \eqref{integrate_one} holds),
\item[ii)] $f_{n}$ is globally Lipschitz continuous in $(x,t)$ on the set $[0,+\infty)\times[0,T]$ (because $\eta_{\varepsilon_{n}}$ is globally Lipschitz continuous and $\psi_{n}$ is integrable),
\item[iii)]$f_{n} \to f$ (locally uniformly  on the set $(0, +\infty) \times [0,T)$) (by the continuity of $f$ on the set $(0, +\infty) \times [0,T)$ and standard arguments, Evans \cite[Appendix C.5]{Evans}), 
\item[iv)]$f_{n}$ is H\"older continuous on compact subsets of $(0,+\infty)\times (-1,T)$ uniformly with respect to $n$. 
\end{enumerate}

To provide more evidence for the correctness of iv), we fix a compact set $U \subset (0,+\infty)\times (-1,T)$ and define
\[
(U - B(0,\varepsilon)):=\{z-\zeta|\;z \in U, \zeta \in B(0,\varepsilon)\}.
\]
  Note that there exists $n_{0} \in \mathbb{N}$ such that for all $n \geq n_{0}$ we have 
 \[(U - B(0,\varepsilon_{n})) \subset (U - B(0,\varepsilon_{n_{0}})) \subset (0,+\infty)\times [-1,T).\]
The function $f$ is H\"older continuous on $U - B(0,\varepsilon_{n_{0}})$, which means that there exist $K_{0}>0$, $l \in (0,1]$ such that for all $\xi,\bar{\xi} \in (U - B(0,\varepsilon_{n_{0}}))$
\begin{equation} \label{Holder}
|f(\xi)-f(\bar{\xi})| \leq K_{0}\|\xi-\bar{\xi}\|^{l}.
\end{equation}
Now, we fix $z,\bar{z} \in U$ and $n \geq n_{0}$.  
Using \eqref{Holder}, we arrive at
\begin{align*}
|f_n(z)-f_n(\bar{z})| &\leq \int_{B(0,\varepsilon_{n})} \eta_{\varepsilon_{n}}(\zeta) |\psi_{n}(z-\zeta)- \psi_{n}(\bar{z}-\zeta)|  \; d \zeta \\ &\leq K_{0}\int_{B(0,\varepsilon_{n})} \eta_{\varepsilon_{n}}(\zeta) \|z-\bar{z}\|^{l} \; d \zeta \leq K_{0} \|z-\bar{z}\|^{l}.
\end{align*} 

By Rubio \cite[Theorem 3.1]{Rubio} there exists a classical solution \[u_{n} \in   \mathcal{C}^{2+1,1+1/2}_{loc}((0,+\infty) \times [-1,T)) \cap \mathcal{C}([0,+\infty) \times [0,T]))\] to 
\begin{equation*} 
 \begin{cases}
  D_{t}u+ \tfrac{\sigma^{2}(x,t)}{2}D^{2}_{x}u + f_{n}(x,t)=0,  &\quad (x,t) \in (0,+\infty) \times [-1,T), \\
u(x,t)=\beta(x,t), &\quad (x,t) \in  \partial_{p} \left((0,+\infty) \times [-1,T)\right).
\end{cases}
\end{equation*}
In addition, we have the Feynman-Kac formula (see Rubio \cite[Theorem 3.1]{Rubio}), i.e.,
\begin{equation} \label{F-Kf}
u_{n}(x,t)= \ee \biggl[\beta(X_{T \wedge \tau(x,t)}(x,t),T \wedge \tau(x,t))  + \int_{t}^{T \wedge \tau(x,t)} f_{n} (X_{s}(x,t),s)\; ds \biggr].
\end{equation}

Now, according to Ladyzhenskaja et al. \cite[Chapter IV, Theorem 10.1]{Lady} for any two compact sets of the form $[\frac{1}{k},k]\times [0,T-\frac{1}{k}] \subset [\frac{1}{k+1},k+1] \times [-\frac{1}{k},T-\frac{1}{k}] \subset (0,+\infty)\times (-1,T)$, $k \in \mathbb{N},\; k \geq 2$ there exist  $M_{k}>0$ and $l \in (0,1]$ such that
\begin{multline*}
\|u_{n}\|_{\mathcal{C}^{2+l,1+l/2}\left([\frac{1}{k},k]\times [0,T-\frac{1}{k}]\right)} \\ \leq M_{k} \left(\|u_{n}\|_{C \left([\frac{1}{k+1},k+1] \times [-\frac{1}{k},T-\frac{1}{k}] \right)} + \|f_{n}\|_{\mathcal{C}^{l,l/2}\left([\frac{1}{k+1},k+1] \times [-\frac{1}{k},T-\frac{1}{k}]\right)} \right) \\  k,n \in \mathbb{N},\;k\geq 2.
\end{multline*}
Note that both $\|u_{n}\|_{C \left([\frac{1}{k+1},k+1] \times [-\frac{1}{k},T-\frac{1}{k}] \right)}$  and $\|f_{n}\|_{\mathcal{C}^{l,l/2}\left([\frac{1}{k+1},k+1] \times [-\frac{1}{k},T-\frac{1}{k}]\right)}$ are bounded (by (i), \eqref{F-Kf}, and (iv)). This leads to boundedness and equicontinuity of families $(u_{n},n \in \mathbb{N})$, $(D_x u_{n},n \in \mathbb{N})$, $(D_x^{2} u_{n},n \in \mathbb{N})$,
$(D_t u_{n},n \in \mathbb{N})$ on each set $[\frac{1}{k},k]\times [0,T-\frac{1}{k}]$. By the Arzela-Ascoli lemma (applied to the each set $[\frac{1}{k},k]\times [0,T-\frac{1}{k}]$ separately) and the standard diagonal argument, there exists a subsequence $(n_k, k \in \mathbb{N})$ such that $(u_{n_k},k \in \mathbb{N})$ converges to $u$ together with its first and second spatial derivatives and the first time derivative, uniformly in the H\"older norm, on each compact subset of $(0,+\infty) \times [0,T)$  and consequently $u$ is a solution to \eqref{firsteq} and $\|u\|_{\mathcal{C}^{2+l,1+l/2}(G\times I)}<+\infty$  on every compact set  $G\times I \subset (0,+\infty) \times [0,T)$. The boundedness of $u$ follows from \eqref{F-Kf}.

The next step is to prove the Feynman-Kac formula for the function $u$. We start by noting that 
\[
u_{n_k}(x,t)= \ee \left[\beta(X_{T \wedge \tau(x,t)}(x,t),T \wedge \tau(x,t))+ \int_{t}^{T \wedge \tau(x,t)} f_{n_k} (X_{s}(x,t),s)\; ds \right].
\]
The family $(f_{n}\; n \in \mathbb{N})$ is uniformly bounded, therefore, using the dominated convergence theorem and passing to the limit (as $k \to +\infty$), we get
\[
u(x,t) = \ee \left[\beta(X_{T \wedge \tau(x,t)}(x,t),T \wedge \tau(x,t))+ \int_{t}^{T \wedge \tau(x,t)} f(X_{s}(x,t),s)\; ds \right].
\]

Finally, we need to show the continuity up to the boundary of the function $u$. Note that the function $\beta$ is Lipschitz continuous, while $f$ is bounded, which means that for all $(\bar{x},\bar{t}) \in \partial_{p} \left((0,+\infty) \times [0,T)\right)$ we have
\begin{align*}
\left|u(x,t)- \beta(\bar{x},\bar{t})\right| \leq L_{\beta} \left( \ee\left|X_{T \wedge \tau(x,t)}(x,t) -
\bar{x}\right| +  \ee\left|T \wedge \tau(x,t) -\bar{t}\right| \right) \\ + \| f\|^{0}\ee\left|T \wedge \tau(x,t) -t\right|.
\end{align*}
Additionally, we have
\[
\ee\left|(T \wedge \tau(x,t)) -\bar{t}\right| \leq \ee\left|(T \wedge \tau(x,t)) -t\right|+ |t-\bar{t}|.
\]
Using the H\"older inequality and the It\^o isometry, we get 
\begin{multline*}
\ee \left|X_{T \wedge \tau(x,t)}(x,t) -\bar{x}\right| \leq |x-\bar{x}| + \left[\ee \int_{t}^{T \wedge \tau(x,t)} [ \sigma(X_{s}(x,t),s)]^2\; ds   \right]^{\frac{1}{2}} \\ \leq |x-\bar{x}|+ \|\sigma\|^{0}  \left[\ee\left( T \wedge \tau(x,t))-t \right) \right]^{\frac{1}{2}}.
\end{multline*}
Consequently, if $\lim_{(x,t) \to (\bar{x},\bar{t})}\ee \left(( T \wedge \tau(x,t))-t \right) = 0$, then the function $u$ is continuous up to the boundary. However, note that Rubio \cite[Theorem 3.1]{Rubio} proved the existence of a continuous solution to the Cauchy--Dirichlet problem
\[
 \begin{cases}
  D_{t} u+ \frac{1}{2}\sigma^2(x,t)D^{2}_{x}u +1=0,  &\quad (x,t) \in (0,+\infty) \times [0,T), \\
u(x,t)=0, &\quad (x,t) \in  \partial_{p} \left((0,+\infty) \times [0,T)\right),
\end{cases}
\]
which additionally has the Feynman--Kac representation: 
\[u(x,t)= \mathbb{E} \left[ \int_{t}^{T \wedge \tau(x,t)}1 \; ds\right]= \mathbb{E}\left(T \wedge \tau(x,t)\right) -t.\] 
The continuity of the function $u$ confirms that \[\lim_{(x,t) \to (\bar{x},\bar{t})}\ee \left(( T \wedge \tau(x,t))-t \right) = 0.\]
\end{proof}

We now present a version of the Feynman-Kac formula that will be used to prove subsequent results.

\begin{prop} \label{Feynman} Suppose that condition A1) is satisfied and 
\[u \in \mathcal{C}^{2,1}\left((0,+\infty) \times [0,T)\right) \cap \mathcal{C}\left([0,+\infty) \times [0,T]\right)\] is a bounded solution to
\begin{equation*} 
 \begin{cases}
  D_{t}u+ \tfrac{\sigma^{2}(x,t)}{2}D^{2}_{x}u + f(x,t)=0,  &\quad (x,t) \in (0,+\infty) \times [0,T), \\
u(x,t)=\beta(x,t), &\quad (x,t) \in  \partial_{p} \left((0,+\infty) \times [0,T)\right)
\end{cases}
\end{equation*}
where $f:(0,+\infty) \times [0,T) \to \mathbb{R}$ and $\beta:\partial_{p} \left((0,+\infty) \times [0,T)\right)\to \mathbb{R}$ are bounded and continuous functions. Then
\[
u(x,t) = \ee \left[\beta(X_{T \wedge \tau(x,t)}(x,t),T \wedge \tau(x,t))+ \int_{t}^{T \wedge \tau(x,t)} f(X_{s}(x,t),s)\; ds \right],
\]
where
\[ \tau (x, t): = \inf \{s \geq t | \; X_{s}(x,t)\leq 0\}
\]
and 
\[
dX_{s}=\sigma(X_{s},s)\; dW_{s}, \quad X_{t}=x.
\]
\end{prop}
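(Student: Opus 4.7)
The plan is a standard Itô-and-localize argument, adapted to the fact that $u$ is $\mathcal{C}^{2,1}$ only in the open domain and merely continuous up to the parabolic boundary. The boundary cases $x=0$ and $t=T$ are handled immediately by the boundary condition $u=\beta$, so fix $(x,t)\in (0,+\infty)\times[0,T)$.

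\medskip

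First, I would introduce the localizing stopping times
\[
\tau_{n}:=\inf\bigl\{s\geq t\;\big|\; X_{s}(x,t)\leq \tfrac{1}{n}\ \text{or}\ X_{s}(x,t)\geq n\bigr\},\qquad \theta_{n}:=\tau_{n}\wedge \bigl(T-\tfrac{1}{n}\bigr),
\]
chosen (for $n$ large enough that $1/n<x<n$ and $1/n<T-t$) so that on the stochastic interval $[t,\theta_{n}]$ the path $(X_{s},s)$ remains inside the compact set $K_{n}:=[\tfrac{1}{n},n]\times[0,T-\tfrac{1}{n}]\subset (0,+\infty)\times[0,T)$. On such compact sets $D_{t}u$, $D_{x}u$ and $D^{2}_{x}u$ are continuous (hence bounded), and under A1) so is $\sigma$.

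\medskip

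Next I would apply Itô's formula to $u(X_{s},s)$ on $[t,\theta_{n}]$, which is legal because $u\in \mathcal{C}^{2,1}$ on a neighbourhood of $K_{n}$. Using the PDE $D_{t}u+\tfrac{\sigma^{2}}{2}D_{x}^{2}u=-f$ inside the domain gives
\[
u(X_{\theta_{n}},\theta_{n})=u(x,t)-\int_{t}^{\theta_{n}}f(X_{s},s)\,ds+\int_{t}^{\theta_{n}}D_{x}u(X_{s},s)\,\sigma(X_{s},s)\,dW_{s}.
\]
The integrand of the stochastic integral is bounded on $[t,\theta_{n}]$ by construction, so this integral is a true martingale and taking expectations yields
\[
u(x,t)=\ee\,u(X_{\theta_{n}},\theta_{n})+\ee\int_{t}^{\theta_{n}}f(X_{s},s)\,ds.
\]

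\medskip

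Finally, I would pass to the limit $n\to\infty$. Since $X$ has continuous paths and does not explode (A1) ensures bounded, Lipschitz coefficients), $\sup_{s\in[t,T]}X_{s}<\infty$ a.s., so the upper exit level $n$ is eventually irrelevant, and $\tau_{n}\nearrow\tau(x,t)$ a.s.; combined with $T-1/n\nearrow T$ this gives $\theta_{n}\nearrow T\wedge\tau(x,t)$ a.s. By path continuity, $X_{\theta_{n}}\to X_{T\wedge\tau(x,t)}$ a.s., with the limiting value lying on $\partial_{p}((0,+\infty)\times[0,T))$ on $\{\tau\leq T\}$ (where $X_{\tau}=0$ by continuity of $X$) and equal to $X_{T}$ on $\{\tau>T\}$. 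The continuity of $u$ on the closed half-strip and the boundary condition $u=\beta$ on $\partial_{p}$ then give $u(X_{\theta_{n}},\theta_{n})\to \beta(X_{T\wedge\tau(x,t)},T\wedge\tau(x,t))$ a.s. Since $u$ and $f$ are bounded and $\theta_{n}-t\leq T$, dominated convergence applies to both terms, yielding the desired identity.

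\medskip

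The main subtlety, and the only step requiring care beyond routine verification, is ensuring the stochastic integral is a genuine martingale on each $[t,\theta_{n}]$ while choosing the localization so that it still exhausts $[t,T\wedge\tau(x,t)]$ in the limit; this is exactly what the two-sided cut $\tfrac{1}{n}\leq X\leq n$ combined with the time truncation $T-\tfrac{1}{n}$ accomplishes, using non-explosion of $X$ to discard the upper threshold in the limit.
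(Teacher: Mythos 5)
Your proof is correct and follows essentially the same route as the paper: localize with the hitting times of level $\tfrac{1}{n}$ and the time truncation $T-\tfrac{1}{n}$, obtain the identity on the localized domain (the paper cites Fleming--Soner's Lemma~3.1(ii) for this step, where you carry out the It\^o computation explicitly with an extra upper cutoff at $n$), and pass to the limit using continuity of $u$ up to the parabolic boundary and dominated convergence. The only point the paper spells out in more detail is why $\lim_{n}\tau_{n}=\tau(x,t)$ (via $X_{\lim\tau_{n}}=\lim X_{\tau_{n}}=0$), which you assert with only a brief justification but which is standard.
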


\begin{proof}
Define
\[
\tau_{n}(x,t):=\inf \left \{s \geq t | \; X_{s}(x,t)\leq \frac{1}{n} \right \}, \quad n \in \mathbb{N}.
\]
By the standard Feynman--Kac formula (see the proof of Felming and Soner \cite[Chapter IV, Lemma 3.1, (ii)]{FS}), we have
\begin{multline*}
u(x,t) = \ee \biggl[u\left(X_{\left(T-\frac{1}{n}\right) \wedge \tau_{n}(x,t)}(x,t),\left(T-\frac{1}{n}\right) \wedge \tau_{n}(x,t)\right) \\+ \int_{t}^{\left(T-\frac{1}{n}\right) \wedge \tau_{n}(x,t)} f(X_{s}(x,t),s)\; ds \biggr].
\end{multline*}
Observe that $(\tau_{n}, n \in \mathbb{N})$ is an increasing sequence, bounded by $\tau(x,t)$. Therefore, there exists $\hat{\tau}(x,t):=\lim_{n \to +\infty} \tau_{n}(x,t)$ and $\hat{\tau}(x,t) \leq \tau(x,t)$. Additionally, we have
\[
0= \lim_{n \to + \infty} \frac{1}{n}\mathbf{1}_{\{ \hat{\tau}(x,t) < + \infty\}} = \lim_{n \to + \infty}\mathbf{1}_{\{ \hat{\tau}(x,t) < + \infty\}} X_{\tau_{n}(x,t)} =\mathbf{1}_{\{ \hat{\tau}(x,t) < + \infty\}}X_{\hat{\tau}(x,t)}.
\]
This yields $\hat{\tau}(x,t) = \tau(x,t)$.
Since $\lim_{n \to \infty} \tau_{n}(x,t) =\tau(x,t)$, we can apply the dominated convergence theorem  and pass to the limit under the expectation sign and obtain
  \[
u(x,t) = \ee \left[\beta(X_{T \wedge \tau(x,t)}(x,t),T \wedge \tau(x,t))+ \int_{t}^{T \wedge \tau(x,t)} f(X_{s}(x,t),s)\; ds \right].
\]

\end{proof}

The next step in our reasoning is to prove estimates for $\|u\|_{\kappa}$ where $u$ is a solution to \eqref{firsteq}.
For the proof we need first to find a proper estimate for the Cauchy problem
\begin{equation}\label{Cauchy0}
 \begin{cases}
D_{t} v+\tfrac{\sigma^{2}(x,t)}{2}D_{x}^{2}v+ f(x,t)=0, &\quad (x,t) \in \mathbb{R} \times [0,T), \\
v(x,T)=0, &\quad x \in \mathbb{R}. \\
\end{cases}
\end{equation} 
By $v_{f}$ we denote the unique bounded classical (class $C^{2,1}$) solution to \eqref{Cauchy0}. It exists due to Heath and Schweizer  \cite[Theorem 1 and a comment on condition A3') (pages 950-951)]{Heath} for $\sigma$ being Lipschitz continuous, bounded and bounded away from zero and for $f$ being H\"older continuous on compact subsets of $\mathbb{R} \times [0,T]$.
We have the following proposition.

\begin{prop}\label{norm:estimates:Cauchy}
Under A1) there exists a constant $M$ such that for all $\kappa >0$ and all $f$ being H\"{o}lder continuous on compact subsets of $\mathbb{R} \times [0,T]$ and bounded we have
\[\|D_{x} v_{f}\|_{\kappa}^{0,\mathbb{R}} \leq \dfrac{M}{\sqrt[3]{\kappa}}\|f\|_{\kappa}^{0,\mathbb{R}}.\]
\end{prop}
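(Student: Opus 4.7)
The plan is to use the probabilistic (Feynman--Kac) representation of $v_f$ together with a smoothing estimate for the spatial derivative, and then to absorb the resulting time singularity into the exponential weight defining $\|\cdot\|_\kappa^{0,\mathbb{R}}$.

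First, by the Heath--Schweizer theorem invoked just before the statement, the solution $v_f$ of \eqref{Cauchy0} has the representation
\[
v_f(x,t)=\mathbb{E}\left[\int_t^T f(X_s(x,t),s)\,ds\right],\qquad dX_s=\sigma(X_s,s)\,dW_s,\ X_t=x.
\]
To extract the gradient I would use a Bismut--Elworthy--Li integration-by-parts identity: for each $s\in(t,T]$,
\[
D_x\mathbb{E}[f(X_s,s)]=\frac{1}{s-t}\mathbb{E}\!\left[f(X_s,s)\int_t^s\frac{D_xX_r}{\sigma(X_r,r)}\,dW_r\right],
\]
where $D_xX_r$ is the first variation process (well-defined under A1) supplemented by the $C^1$ regularity of $\sigma$ used in Section~5, with $\sup_r\mathbb{E}|D_xX_r|^2<\infty$). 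Cauchy--Schwarz, the It\^o isometry, and the lower bound $\sigma\geq\sigma_0>0$ then yield the standard smoothing estimate
\[
\bigl|D_x\mathbb{E}[f(X_s,s)]\bigr|\leq\frac{C\,\|f(\cdot,s)\|_{L^\infty(\mathbb{R})}}{\sqrt{s-t}}.
\]
A purely analytic alternative is to differentiate the convolution of $f$ with the fundamental solution $\Gamma$ of $D_t+\tfrac{\sigma^2}{2}D_x^2$ and to use Aronson/Friedman Gaussian bounds giving $\int_\mathbb{R}|D_x\Gamma(t,x;s,y)|\,dy\leq C/\sqrt{s-t}$.

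Next, integrating in $s$ and inserting the weighted bound $\|f(\cdot,s)\|_{L^\infty(\mathbb{R})}\leq e^{\kappa(T-s)}\|f\|_\kappa^{0,\mathbb{R}}$, the substitution $u=s-t$ turns the time integral into a gamma-function type integral:
\[
|D_xv_f(x,t)|\leq C\,e^{\kappa(T-t)}\|f\|_\kappa^{0,\mathbb{R}}\int_0^{\infty}\frac{e^{-\kappa u}}{\sqrt{u}}\,du=\frac{C\sqrt{\pi}}{\sqrt{\kappa}}\,e^{\kappa(T-t)}\|f\|_\kappa^{0,\mathbb{R}}.
\]
Dividing by $e^{\kappa(T-t)}$ gives $\|D_xv_f\|_\kappa^{0,\mathbb{R}}\leq C\kappa^{-1/2}\|f\|_\kappa^{0,\mathbb{R}}$, which is already stronger than the claim in the regime $\kappa\geq 1$ since $\kappa^{-1/2}\leq\kappa^{-1/3}$ there. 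For $\kappa\in(0,1)$ the desired inequality is trivial because $\kappa^{-1/3}\geq 1$ while the ratio $\|D_xv_f\|_\kappa^{0,\mathbb{R}}/\|f\|_\kappa^{0,\mathbb{R}}$ admits a uniform bound $C\sqrt{T}\,e^{T}$ obtained from the unweighted version of the same heat-kernel estimate; enlarging $M$ accordingly absorbs both regimes.

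The main technical obstacle is the rigorous justification of the Bismut--Elworthy--Li identity when $f$ is only locally H\"older continuous and bounded rather than smooth. I would handle this by the mollification strategy already deployed in Proposition~\ref{approx}: approximate $f$ by smooth $f_n$ with uniform sup-bounds such that $f_n\to f$ locally uniformly, apply the identity to each $f_n$ (for which differentiation inside the expectation is elementary via the chain rule and the SDE for $D_xX_r$), and pass to the limit using dominated convergence on the stochastic-integral side and the local $\mathcal{C}^{2,1}$-compactness of $v_{f_n}$ (exactly as in Proposition~\ref{approx}) on the derivative side. The analytic alternative via the fundamental solution $\Gamma$ avoids this issue entirely and leads to the same estimate.
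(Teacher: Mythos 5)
Your proposal is correct in substance but takes a genuinely different route from the paper at two points, and it is worth seeing what each buys. The paper also works with the fundamental solution $\Gamma$ and the Gaussian bound $|D_{x}\Gamma|\leq C(s-t)^{-1}\exp(-c|y-x|^{2}/(s-t))$, but it deliberately does \emph{not} differentiate $v_{f}$ under the integral sign: it instead bounds the Lipschitz constant of $v_{f}(\cdot,t)$ by applying the mean value theorem to $\Gamma$ itself and splitting the spatial integral over $\mathbb{R}\setminus(x,\bar{x})$ and $(x,\bar{x})$, so that the Gaussian centred at the intermediate point $x^{*}$ can be dominated by the sum of the Gaussians centred at $x$ and $\bar{x}$. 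This domain-splitting is precisely the device that replaces the justification of the interchange of $D_{x}$ and the integral, which the paper explicitly advertises as the point of its argument. Your analytic alternative asserts $D_{x}v_{f}=\int\!\!\int D_{x}\Gamma\, f$ directly; this is a citable classical fact for first-order derivatives of volume potentials (Friedman, Chapter~1), but it is exactly the step the paper's trick is designed to avoid, so you should either cite it or reproduce the splitting argument. The second difference is in the time integral: you evaluate $\int_{0}^{\infty}e^{-\kappa u}u^{-1/2}\,du=\sqrt{\pi/\kappa}$ exactly and obtain the sharper rate $\kappa^{-1/2}$, whereas the paper applies H\"older's inequality with exponents $3/2$ and $3$ and settles for $\kappa^{-1/3}$; your two-regime argument (using the unweighted bound $C\sqrt{T}$ for $\kappa\in(0,1)$, where $\kappa^{-1/3}\geq 1$) correctly recovers the stated inequality for all $\kappa>0$, and in fact proves a stronger estimate for $\kappa\geq 1$.

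One caveat: the Bismut--Elworthy--Li branch of your argument is not available under A1) alone, since A1) gives only Lipschitz continuity of $\sigma$ and the first variation process (and hence the integration-by-parts identity) requires differentiability of $\sigma$; you acknowledge this, but it means that branch proves the proposition only under the stronger hypotheses of Section~5, not as stated. The fundamental-solution branch does work under A1) (Lipschitz implies the local H\"older continuity of the coefficients needed for Friedman's construction of $\Gamma$ and its Gaussian bounds), so your proof should be read with that branch as the primary argument and the probabilistic one as a remark.
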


\begin{proof}
In the proof, we use the theory of fundamental solutions for parabolic equations. The inspiration to use it in estimation of $D_{x}v_{f}$ comes from Bychowska and Leszczy\'nski \cite{Bychowska}, but the proof itself is our original contribution. It presents an original idea how to avoid differentiation under the integral when the function has a singularity. The fundamental solution is denoted by
$\Gamma(x,t,z,s)$.
Recall that
there exist $c,C>0$ such that 
\begin{align} 
|\Gamma(x,t,y,s)| &\leq \frac{C}{(s-t)^{1/2}} \exp \biggl(-c \frac{|y-x|^{2}}{(s-t)}\biggr), \notag \\ |D_{x} \Gamma(x,t,y,s)| &\leq \frac{C}{(s-t)} \exp \biggl(-c \frac{|y-x|^{2}}{(s-t)}\biggr), \notag \quad \quad s>t, \; x,y \in \mathbb{R}
\end{align}
(see Friedman \cite[ Chapter 1, eq. (6.12), eq. (6.13)]{Friedman}).

Instead of estimating $D_{x} v_{f}$ itself, we find a bound by estimating the Lipschitz constant in $x$ for $v_{f}$. First, we fix $x,\bar{x} \in \mathbb{R}$, $x<\bar{x}$, and $t \in [0,T)$. Using elementary facts from the linear parabolic equation theory, we get
\begin{align*}
|v_{f}(x,t)-v_{f}(\bar{x},t)|  &= \left| \mathbb{E} \int_{t}^{T}  \left(f(X_{s}(x,t),s)-f(X_{s}(\bx,t),s) \right) \; ds \right| \\
&= \left| \int_{t}^{T} \mathbb{E} \left(f(X_{s}(x,t),s)-f(X_{s}(\bx,t),s) \right) \; ds \right|
\\ &= \left| \int_{t}^{T} \int_{\mathbb{R}} f(z,s) \left(\Gamma(x,t,z,s) - \Gamma(\bar{x},t,z,s)\right)\; dz \; ds \right| \\ &\leq \int_{t}^{T} \int_{\mathbb{R}} |f(z,s)| \left|\Gamma(x,t,z,s) - \Gamma(\bar{x},t,z,s)\right|\; dz \;ds,
\end{align*}
where
\[
dX_{s}=\sigma(X_{s},s)\;dW_{s}, \quad X_{t}=x.
\]
Note that
\begin{align*}
&\int_{\mathbb{R}} |f(z,s)| \left|\Gamma(x,t,z,s) - \Gamma(\bar{x},t,z,s)\right|\; dz \\ & =  \int_{\mathbb{R} \setminus (x,\bar{x})} |f(z,s)| \left|\Gamma(x,t,z,s) - \Gamma(\bar{x},t,z,s)\right|\; dz \\ &\quad + \int_{x}^{\bar{x}} |f(z,s)| \left|\Gamma(x,t,z,s) - \Gamma(\bar{x},t,z,s)\right|\; dz\\  & \leq \int_{\mathbb{R} \setminus (x,\bar{x})} |f(z,s)| \left|\Gamma(x,t,z,s) - \Gamma(\bar{x},t,z,s)\right|\; dz \\ &\quad + \int_{x}^{\bar{x}} |f(z,s)| \left|\Gamma(x,t,z,s) + \Gamma(\bar{x},t,z,s)\right|\; dz. 
\end{align*}
According to the mean value theorem, there exists $x^{*}(t,z,s) \in (x,\bar{x})$ such that 
\[
\Gamma(x,t,z,s) - \Gamma(\bar{x},t,z,s)=D_{x}\Gamma(x^{*}(t,z,s),t,z,s)(x-\bar{x}). 
\]
Furthermore, we already know that there exist $c,C>0$ such that
\[
|D_{x}\Gamma(x^{*}(t,z,s),t,z,s)| \leq \frac{C}{(s-t)} \exp \biggl(-c \frac{|z-x^{*}(t,z,s)|^{2}}{(s-t)}\biggr).
\]
 We have $x^{*}(t,s,z) \in [x,\bar{x}]$ and thus
 \begin{multline*}
 \exp \biggl(-c \frac{|z-x^{*}(t,z,s)|^{2}}{(s-t)}\biggr)\leq \exp \biggl(-c \frac{|z-x|^{2}}{(s-t)}\biggr) + \exp \biggl(-c \frac{|z-\bar{x}|^{2}}{(s-t)}\biggr),  \\ z \in \mathbb{R} \setminus (x,\bar{x}).
 \end{multline*} We can also use the fact
\[ \int_{\mathbb{R}} \frac{1}{\sqrt{s-t}} \exp \biggl(-c \frac{|z-x|^{2}}{(s-t)}\biggr)\; dz=\frac{\sqrt{2 \pi }}{\sqrt{2c}}, \quad x \in \mathbb{R}.
\] 
 In summary, we have the following inequality
\begin{align*}
 |v_{f}(x,t)-v_{f}(\bar{x},t)|&\leq 2C\left(\frac{\sqrt{2 \pi }}{\sqrt{2c}}+1\right) \left[\int_{t}^{T} \frac{1}{\sqrt{s-t}} e^{\kappa(T-s)} ds \right]\| f \| _{\kappa}^{0,\mathbb{R}}|x-\bar{x}|.   
\end{align*}
Multiplying both the sides by $e^{-\kappa (T-t)}$ and using the H\"older inequality, we obtain
\begin{align*}
&e^{-\kappa (T-t)} |v_{f}(x,t)-v_{f}(\bar{x},t)| \\ &\leq 2C\left(\frac{\sqrt{2 \pi }}{\sqrt{2c}}+1\right) \left[\int_{t}^{T} \frac{1}{\sqrt{s-t}} e^{\kappa(t-s)} ds \right]\| f \| _{\kappa}^{0,\mathbb{R}}|x-\bar{x}| \\ 
\\ 
& \leq 2C\left(\frac{\sqrt{2 \pi }}{\sqrt{2c}}+1\right)\| f \| _{\kappa}^{0,\mathbb{R}} \biggl(\int_{t}^{T} (s-t)^{-\frac{3}{4}} ds\biggr)^{\frac{2}{3}} \biggr( \int_{t}^{T} e^{3\kappa(t-s)} ds \biggr)^{\frac{1}{3}}|x-\bar{x}|.
\end{align*}
Additionally, we have
\[
\left[\int_{t}^{T} e^{3\kappa(t-s)} ds\right]^{\frac{1}{3}} =\left[\frac{1}{3 \kappa} \left[1- e^{3\kappa(t-T)}\right]\right]^{\frac{1}{3}} \leq \frac{1}{\sqrt[3]{3 \kappa}},
\]
which completes the proof.
\end{proof}

Let $u_{f} \in  \mathcal{C}^{2,1}((0,+\infty) \times [0,T)) \cap \mathcal{C}([0,+\infty) \times [0,T])$ denote  the unique bounded classical solution to 
\begin{equation*} 
 \begin{cases}
  D_{t}u+ \tfrac{\sigma^{2}(x,t)}{2}D^{2}_{x}u + f(x,t)=0,  &\; (x,t) \in (0,+\infty) \times [0,T), \\
u(x,t)=0, &\; (x,t) \in  \partial_{p} \left((0,+\infty) \times [0,T)\right).
\end{cases}
\end{equation*}
It exists due to Proposition \ref{approx} for $\beta \equiv 0$ and for $f$ being H\"older continuous on compact subsets of $(0,+\infty) \times [0,T)$. The uniqueness follows from the Feynman--Kac representation (Proposition \ref{Feynman}). 

The proof of the next result presents a nice and original idea for transferring gradient bound from functions defined on the entire domain $\mathbb{R} \times [0,T)$ to those defined on $(0,+\infty) \times [0,T)$.

\begin{prop} \label{norm:estimates}
Assume that conditions A1) and A4) are satisfied. Then there exists a constant $M>0$ such that for all $\kappa>1$ and for all functions $f$ being H\"older continuous on compact subsets of $\mathbb{R} \times [0,T]$ and bounded we have
\[\| u_{f}\|_{\kappa} \leq \dfrac{M}{\sqrt[3]{\kappa}}\|f\|_{\kappa}^{0,\mathbb{R}}.\]
\end{prop}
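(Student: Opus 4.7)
The plan is to reduce the Cauchy--Dirichlet estimate on the half-line to the full-space estimate of Proposition~\ref{norm:estimates:Cauchy} via an odd reflection argument, realizing the ``transferring gradient bound'' idea announced in the preamble. Introduce the even spatial extension $\tilde\sigma(x,t) := \sigma(|x|,t)$; since $|\cdot|$ is $1$-Lipschitz, $\tilde\sigma$ inherits \textbf{A1)} on $\mathbb{R}\times[0,T]$. Because the naive odd extension of $f$ jumps at $x=0$ whenever $f(0,t)\neq 0$, I first regularize: fix a smooth cutoff $\chi:\mathbb{R}\to[0,1]$ with $\chi(0)=0$ and $\chi\equiv 1$ outside $[-1,1]$, set $f_n(x,t) := \chi(nx) f(x,t)$ for $x\geq 0$, and extend to $\tilde f_n$ on $\mathbb{R}\times[0,T]$ by odd reflection. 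Then $\tilde f_n$ is continuous (because $f_n$ vanishes at $x=0$), H\"older continuous on compact subsets of $\mathbb{R}\times[0,T]$, and satisfies $\|\tilde f_n\|_\kappa^{0,\mathbb{R}} \leq \|f\|_\kappa^{0,\mathbb{R}}$ uniformly in $n$.

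The Heath--Schweizer theorem cited just above Proposition~\ref{norm:estimates:Cauchy} then furnishes a unique bounded classical solution $v_{\tilde f_n}$ to the Cauchy problem with coefficient $\tilde\sigma$, source $\tilde f_n$, and zero terminal value. Because $\tilde\sigma$ is even in $x$ and $\tilde f_n$ is odd in $x$, the function $(x,t)\mapsto -v_{\tilde f_n}(-x,t)$ also solves this Cauchy problem, so uniqueness forces $v_{\tilde f_n}(x,t) = -v_{\tilde f_n}(-x,t)$, and in particular $v_{\tilde f_n}(0,t)=0$. Its restriction to $(0,+\infty)\times[0,T)$ is therefore a bounded classical solution of our Cauchy--Dirichlet problem with data $(\sigma,f_n,\beta\equiv 0)$, and the Feynman--Kac uniqueness (Proposition~\ref{Feynman}) identifies it with $u_{f_n}$. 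Applying Proposition~\ref{norm:estimates:Cauchy} to the pair $(\tilde\sigma,\tilde f_n)$ on $\mathbb{R}$ transfers the desired gradient bound to the half-line:
\[
\sup_{(x,t)\in(0,+\infty)\times[0,T)} e^{-\kappa(T-t)}|D_x u_{f_n}(x,t)| \leq \|D_x v_{\tilde f_n}\|_\kappa^{0,\mathbb{R}} \leq \tfrac{M}{\sqrt[3]{\kappa}} \|\tilde f_n\|_\kappa^{0,\mathbb{R}} \leq \tfrac{M}{\sqrt[3]{\kappa}} \|f\|_\kappa^{0,\mathbb{R}}.
\]

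To conclude, I would pass to the limit $n\to\infty$. Feynman--Kac plus dominated convergence (since $|f_n|\leq|f|$ and $f_n\to f$ pointwise on $(0,+\infty)$) gives $u_{f_n}(x,t)\to u_f(x,t)$ pointwise on $[0,+\infty)\times[0,T]$. Integrating the uniform gradient bound along horizontal segments yields $|u_{f_n}(x,t)-u_{f_n}(\bar x,t)|\leq \tfrac{M}{\sqrt[3]{\kappa}}\|f\|_\kappa^{0,\mathbb{R}} e^{\kappa(T-t)}|x-\bar x|$, a Lipschitz estimate that survives the limit; since $u_f\in\mathcal{C}^{2,1}((0,+\infty)\times[0,T))$ by Proposition~\ref{approx}, the classical derivative $D_x u_f$ inherits the bound pointwise. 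The sup-norm contribution to $\|u_f\|_\kappa$ is immediate from Feynman--Kac: $e^{-\kappa(T-t)}|u_f(x,t)| \leq \|f\|_\kappa^{0,\mathbb{R}}\int_t^T e^{\kappa(t-s)}\,ds \leq \|f\|_\kappa^{0,\mathbb{R}}/\kappa \leq \|f\|_\kappa^{0,\mathbb{R}}/\sqrt[3]{\kappa}$ for $\kappa>1$. The main delicate point, and the reason for regularization, is the possible jump at $x=0$ in the odd extension of $f$: the cutoff restores the H\"older regularity needed to invoke Heath--Schweizer for each $n$, while the uniform Lipschitz bound ensures the gradient estimate propagates through the limit without any further compactness argument.
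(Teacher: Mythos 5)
Your argument is correct, but it follows a genuinely different route from the paper's. The paper works directly with the Feynman--Kac representation of $u_f$ on the half-line and splits the increment $u_f(x,t)-u_f(\bar x,t)$ into two pieces: a term $I_1$ comparing the two trajectories over the \emph{same} random interval $[t, T\wedge\tau(x,t)]$, which is controlled by applying It\^o's formula to the whole-line solution $v_f$ of Proposition~\ref{norm:estimates:Cauchy} together with the comparison theorem (which yields $\ee|X_{T\wedge\tau(x,t)}(x,t)-X_{T\wedge\tau(x,t)}(\bar x,t)|=|x-\bar x|$), and a term $I_2$ comparing the two random intervals for the same trajectory, which is exactly where assumption \textbf{A4)} enters. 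Your method-of-images argument (even extension of $\sigma$, odd extension of a cut-off of $f$, oddness of the whole-line solution by uniqueness, identification of its restriction with $u_{f_n}$ via Proposition~\ref{Feynman}) reduces everything to Proposition~\ref{norm:estimates:Cauchy} and never invokes \textbf{A4)} or the comparison theorem; if one tracks this through with $f\equiv 1$, it would in fact yield the implication \textbf{A1)}$\Rightarrow$\textbf{A4)} without the extra $C^1$ hypotheses of Proposition~\ref{condition:t}, so your proof establishes a formally stronger statement (though \textbf{A4)} is still needed elsewhere in the paper, e.g.\ for the boundary term $w_1$ with nonzero $\beta$ in the contraction proposition). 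Three points should be made explicit to close the argument: (i) Proposition~\ref{norm:estimates:Cauchy} is being applied to the coefficient $\tilde\sigma$ rather than $\sigma$, so one must note that its constant $M$ depends only on the ellipticity and Lipschitz constants, which $\tilde\sigma$ shares with $\sigma$; (ii) the H\"older continuity of $\tilde f_n$ across $x=0$ really does require both that $f_n(0,\cdot)=0$ and that $f$ is H\"older up to $x=0$ (guaranteed here since $f$ is assumed H\"older on compacts of all of $\mathbb{R}\times[0,T]$); (iii) the identification of the restriction of $v_{\tilde f_n}$ with $u_{f_n}$ should be routed through the uniqueness statement following Proposition~\ref{approx}, as you indicate. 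With these details supplied, the proof is complete.
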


\begin{proof} Due to the Feynman - Kac representation (Proposition \ref{Feynman}), we have
\[u_{f}(x,t)=\et \int_{t}^{T \wedge\tau(x,t)} f(X_{s}(x,t),s)\; ds. \]
Hence,
\begin{multline*}
e^{-\kappa(T-t)}u_{f}(x,t)= \et e^{-\kappa(T-t)}\int_{t}^{T \wedge\tau(x,t)}e^{\kappa(T-s)} e^{-\kappa(T-s)}f(X_{s}(x,t),s)\; ds \\ \leq \|f\|_{\kappa}^{0} e^{-\kappa(T-t)}\int_{t}^{T}e^{\kappa(T-s)} \; ds = \frac{1}{\kappa}\|f\|_{\kappa}^{0} e^{-\kappa(T-t)}(e^{\kappa(T-t)} -1) \leq \frac{1}{\kappa}\|f\|_{\kappa}^{0,\mathbb{R}}.
\end{multline*}

Once again, a bound for the derivative $D_{x} u_{f}$ is determined by estimating the Lipschitz constant. Fix $x,\bar{x} \in [0,+\infty]$ and assume $x \geq \bar{x}$. In particular, the last assumption implies
\[
P\left(\forall_{s \in [t,T]} \; X_{s}(x,t) \geq  X_{s}(\bx,t)\right)=1
\]
(by the comparison theorem -- see e.g. Ikeda and Watanabe \cite[Theorem 1.1]{Ikeda}), and
therefore $\tau(x,t) \geq \tau(\bx,t)$ a.s. 

We have
\begin{align*}
|u_{f}(x,t)-& u_{f}(\bar{x},t)| \\ \leq &\left| \ee  \int_{t}^{T \wedge\tau(x,t)} f(X_{s}(x,t),s)\;ds - \ee  \int_{t}^{T \wedge\tau(x,t)}f(X_{s}(\bar{x},t),s)\;ds \right|  \\ & \quad +  \left| \ee \int_{t}^{T \wedge\tau(x,t)} f(X_{s}(\bar{x},t),s)\; ds - \ee \int_{t}^{T \wedge\tau(\bx,t)} f(X_{s}(\bar{x},t),s) \; ds \right|\\ & \qquad =:I_{1} + I_{2}.
\end{align*}

 First, to estimate $I_1$, consider the Cauchy problem \eqref{Cauchy0}. Note that for the solution $v_{f}$ we have obtained (Proposition \ref{norm:estimates:Cauchy}) the inequality
 \begin{equation}\label{inequality:from:Cauchy}
 \|D_{x} v_{f}\|_{\kappa}^{0} \leq \dfrac{M}{\sqrt[3]{\kappa}}\|f\|_{\kappa}^{0,\mathbb{R}}.
 \end{equation}
Additionally, the It\^o formula yields  
 \begin{equation} \label{fromCauchy}
\ee v_{f}(X_{T \wedge \tau(x,t)}(x,t), T \wedge \tau(x,t))=v_{f}(x,t)\\ - \ee \int_{t}^{T \wedge \tau(x,t)} f(X_{s}(x,t),s)\; ds
 \end{equation}
 and
 \begin{equation} \label{fromCauchy2}
\ee v_{f}(X_{T \wedge \tau(x,t)}(\bar{x},t), T \wedge \tau(x,t))=v_{f}(\bar{x},t) \\- \ee \int_{t}^{T \wedge \tau(x,t)} f(X_{s}(\bar{x},t),s)\; ds.
 \end{equation}
Using \eqref{fromCauchy} and \eqref{fromCauchy2}, we obtain the following

\begin{align*}
I_{1}&=\left| \ee  \int_{t}^{T \wedge\tau(x,t)} f(X_{s}(x,t),s)\; ds - \ee  \int_{t}^{T \wedge\tau(x,t)}f(X_{s}(\bar{x},t),s)\; ds \right|\\
&=\left|  \ee  \int_{t}^{T \wedge\tau(x,t)}f(X_{s}(\bar{x},t),s)\; ds - \ee  \int_{t}^{T \wedge\tau(x,t)} f(X_{s}(x,t),s)\; ds \right|
  \\ &= \begin{aligned}[t]\biggl|\ee v_{f}(X_{T \wedge \tau(x,t)}(x,t),  T \wedge \tau(x,t))-\ee &v_{f}(X_{T \wedge \tau(x,t)}(\bar{x},t),  T \wedge \tau(x,t))\\ 
 &+ v_{f}(\bar{x},t)- v_{f}(x,t) \biggr| \end{aligned}\\
&\leq \begin{aligned}[t] \biggl|\ee v_{f}(X_{T \wedge \tau(x,t)}(x,t), T \wedge \tau(x,t))-\ee & v_{f}(X_{T \wedge \tau(x,t)}(\bar{x},t),  T \wedge \tau(x,t))\biggr|\\ &+ \left|v_{f}(\bar{x},t)- v_{f}(x,t) \right|. \end{aligned}
\end{align*}
Note that the function $v_{f}$ satisfies \eqref{inequality:from:Cauchy} and therefore, 
\[
\left|v_{f}(\bar{x},t)- v_{f}(x,t) \right| \leq e^{\kappa(T-t)}\dfrac{M}{\sqrt[3]{\kappa}}\|f\|_{\kappa}^{0,\mathbb{R}}|x-\bar{x}|.
\]
In addition,
\begin{multline*}
\left|\ee v_{f}(X_{T \wedge \tau(x,t)}(x,t), T \wedge \tau(x,t))-\ee v_{f}(X_{T \wedge \tau(x,t)}(\bar{x},t), T \wedge \tau(x,t))\right| \\ \leq \dfrac{M}{\sqrt[3]{\kappa}}\|f\|_{\kappa}^{0,\mathbb{R}} \; \ee e^{\kappa(T-T \wedge \tau(x,t))}|X_{T \wedge \tau(x,t)}(x,t)- X_{T \wedge \tau(x,t)}(\bar{x},t)|.
\end{multline*}
The condition $x \geq \bar{x}$ implies 
\[
X_{T \wedge \tau(x,t)}(x,t) \geq X_{T \wedge \tau(x,t)}(\bar{x},t)
\]
(e.g. Ikeda and Watanabe \cite[Theorem 1.1]{Ikeda}) and consequently 
 \begin{multline*}
 \ee |X_{T \wedge \tau(x,t)}(x,t)- X_{T \wedge \tau(x,t)}(\bar{x},t)| \\ = \ee X_{T \wedge \tau(x,t)}(x,t)- \ee X_{T \wedge \tau(x,t)}(\bar{x},t)= x- \bx= |x-\bx|.
 \end{multline*}
In summary, we get
 \[ 
 e^{-\kappa(T-t)}I_{1} \leq \dfrac{2M}{\sqrt[3]{\kappa}}\|f\|_{\kappa}^{0,\mathbb{R}}|x-\bx|.
 \]
 
For the second addend $I_{2}$ we have
\begin{align*} I_{2} &= \left| \ee \int_{\tau(\bx,t) \wedge T}^{\tau(x,t) \wedge T} f(X_{s}(\bar{x},t),s) \; ds\right| \\ &=\left| \ee \int_{\tau(\bx,t) \wedge T}^{\tau(x,t) \wedge T} e^{\kappa(T-s)} e^{-\kappa(T-s)}f(X_{s}(\bar{x},t),s) \; ds\right|  \\ &\leq  \|f\|_{\kappa}^{0,\mathbb{R}} \ee \left|\int^{\tau(x,t) \wedge T}_{\tau(\bar{x},t)\wedge T} e^{\kappa(T-s)} \;  ds \right| \\ &=  \frac{1}{\kappa}\|f\|_{\kappa}^{0,\mathbb{R}} \ee \left|e^{\kappa(T-T \wedge \tau(\bar{x},t))}- e^{\kappa(T-T \wedge \tau(x,t))}\right|
\end{align*}
and finally,
\begin{multline*}
e^{-\kappa (T-t)} I_{2}  \leq \frac{1}{\kappa}\|f\|_{\kappa}^{0,\mathbb{R}}\ee \left[e^{\kappa(t-T \wedge \tau(\bar{x},t))}- e^{\kappa(t-T \wedge \tau(x,t))}\right] \\ \leq \frac{1}{\kappa}\|f\|_{\kappa}^{0,\mathbb{R}} \ee \left|(T \wedge \tau(\bar{x},t)) -    (T \wedge \tau(x,t))\right| \leq  \frac{1}{\kappa}\|f\|_{\kappa}^{0,\mathbb{R}}|x-\bar{x}|,
\end{multline*}
which ends the proof.
\end{proof}

\begin{rem} \label{to_norm_estimates}
 The assertion of Proposition \ref{norm:estimates} is valid not only for functions $f:\mathbb{R} \times [0,T) \to \mathbb{R}$, which are H\"{o}lder continuous on compact subsets and bounded, but also for functions $f:(0,+\infty)\times [0,T) \to \mathbb{R}$ that are H\"{o}lder continuous on compact subsets and bounded.
 \end{rem} 
\begin{proof} 
 To show this, it is sufficient to approximate the function $f$ by the sequence $(f_{n},n \in \mathbb{N})$ defined in the proof of Proposition \ref{approx}.
By Proposition \ref{norm:estimates}, we have
\begin{equation} \label{forf}
 \| u_{f_{n}}\|_{\kappa} \leq \dfrac{M}{\sqrt[3]{\kappa}}\|f_{n}\|_{\kappa}^{0,\mathbb{R}}\leq \dfrac{M}{\sqrt[3]{\kappa}}\|f\|_{\kappa}^{0}.
\end{equation}
The second inequality follows from the definition of the function $f_{n}$. Namely, 
\begin{align*}
e^{-\kappa(T-t)}f_{n}(x,t)&= \int_{\mathbb{R}^{2}} \eta_{\varepsilon_{n}}(\zeta)e^{-\kappa(T-t)} \psi_{n}((x,t)-\zeta) \; d \zeta \leq 
\int_{\mathbb{R}^{2}} \eta_{\varepsilon_{n}}(\zeta)\|f\|_{\kappa}^{0} \; d \zeta \\ &= \|f\|_{\kappa}^{0}\int_{\mathbb{R}^{2}} \eta_{\varepsilon_{n}}(\zeta)\; d \zeta =\|f\|_{\kappa}^{0}.
\end{align*}
Inequality \eqref{forf} implies that for all $(x,t) \in (0,+\infty) \times [0,T)$ 
\[
e^{-\kappa(T-t)}|u_{f_{n}}(x,t)|+ e^{-\kappa(T-t)}|D_{x} u_{f_{n}}(x,t)| \leq \dfrac{M}{\sqrt[3]{\kappa}}\|f\|_{\kappa}^{0}.
\]
In the proof of Proposition \ref{approx} we proved, by the application of Ladyzhenskaja et al. \cite[Chapter IV, Theorem 10.1]{Lady} that 
there exists a subsequence $(n_{k}, \; k \in \mathbb{N})$ such that $(u_{f_{n_{k}}}\; k \in \mathbb{N})$, $(D_{x} u_{f_{n_{k}}}\; k \in \mathbb{N})$ are convergent to $u_{f}$ and $D_{x} u_{f}$, uniformly on compact subsets. Therefore, for all $(x,t) \in (0,+\infty) \times [0,T)$ 
\[
e^{-\kappa(T-t)}|u(x,t)|+ e^{-\kappa(T-t)}|D_{x} u(x,t)| \leq \dfrac{M}{\sqrt[3]{\kappa}}\|f\|_{\kappa}^{0}
\]
and consequently
\begin{equation*}
 \| u_{f}\|_{\kappa} \leq \dfrac{M}{\sqrt[3]{\kappa}}\|f\|_{\kappa}^{0}.
\end{equation*}
\end{proof}

For $u \in \mathcal{C}^{1+,0+}_{b,loc}$, we can define the mapping 
\begin{align} \label{t}
\mathcal{T} u (x,t) := \et   \biggl [  &\beta(X_{T\wedge \tau(x,t)}(x,t),T\wedge \tau(x,t)) \\ &+  \int_{t}^{T\wedge \tau(x,t)} H(D_{x} u(X_{s}(x,t),s),u(X_{s}(x,t),s),X_{s}(x,t),s)\; ds \biggr ]. \notag
\end{align}
By defining the function $f$ as
\[
f(x,t):=H(D_{x}u(x,t),u(x,t),x,t),
\]
we are able to apply the previous results to the function $\mathcal{T}u$.

\begin{prop} If conditions A1)--A4) are satisfied, then the operator $\mathcal{T}$ maps $\mathcal{C}^{1+,0+}_{b,loc}$ into $\mathcal{C}^{1+,0+}_{b,loc}$ and there exists a constant $\kappa>1$ such that  mapping  \eqref{t} is a contraction with respect to the norm $\|u\|_{\kappa}$.
\end{prop}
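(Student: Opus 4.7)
My plan is to derive the needed properties of $\mathcal{T}u$ from the linear-equation results already established (Propositions \ref{approx}, \ref{Feynman}, \ref{norm:estimates}, and Remark \ref{to_norm_estimates}), together with the structural bounds on $H$ and $\beta$ from A2 and A3. For $u \in \mathcal{C}^{1+,0+}_{b,loc}$ I set
\[
f(x,t) := H(D_{x}u(x,t), u(x,t), x, t).
\]
By A3 the linear growth bound $|H| \leq K(1+|u|+|p|)$ combined with the global bounds on $u$ and $D_{x} u$ gives $f$ bounded, and the local Lipschitz property of $H$ composed with the local H\"older regularity of $(u, D_{x} u)$ yields $f$ H\"older on compact subsets of $(0,+\infty) \times [0,T)$. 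Proposition \ref{approx} then furnishes a bounded classical solution of \eqref{firsteq} with source $f$ and boundary $\beta$, with the local $\mathcal{C}^{2+l,1+l/2}$ regularity, and Proposition \ref{Feynman} identifies this solution with $\mathcal{T}u$. This takes care of continuity up to the parabolic boundary and of the local H\"older regularity of $\mathcal{T}u$ and $D_{x}\mathcal{T}u$.

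To obtain the global bound on $D_{x}\mathcal{T}u$, I would split $\mathcal{T}u = u_\beta + u_f$, where $u_\beta$ corresponds to zero source and boundary $\beta$, and $u_f$ to zero boundary and source $f$. Remark \ref{to_norm_estimates} yields $\|u_f\|_{\kappa} \leq \dfrac{M}{\sqrt[3]{\kappa}} \|f\|_{\kappa}^{0} < +\infty$, hence $D_{x} u_f$ is globally bounded. For $u_\beta$ I would use a coupling argument: for $x \geq \bx > 0$, the comparison theorem provides $X_{s}(x,t) \geq X_{s}(\bx,t)$ and $\tau(x,t) \geq \tau(\bx,t)$ a.s., so the Lipschitz property A2 of $\beta$, the martingale identity for the driftless diffusion, and the hitting-time bound A4 should combine to give $u_\beta$ Lipschitz in $x$ uniformly in $t$, whence $D_{x} u_\beta$ bounded. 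I expect this to be the main obstacle, since the naive decomposition produces an It\^o-isometry term of order $\sqrt{|x-\bx|}$ on the interval $[T \wedge \tau(\bx,t), T \wedge \tau(x,t)]$; recovering the correct Lipschitz rate will require mimicking the treatment of the term $I_{2}$ in the proof of Proposition \ref{norm:estimates} and using A4 to absorb the problematic stochastic-integral contribution into a bound linear in $|x-\bx|$.

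For the contraction step, let $u, \bar u \in \mathcal{C}^{1+,0+}_{b,loc}$ and $\bar f(x,t) := H(D_{x}\bar u(x,t), \bar u(x,t), x, t)$. The boundary contribution in \eqref{t} is identical for $u$ and $\bar u$, so
\[
\mathcal{T}u(x,t) - \mathcal{T}\bar u(x,t) = \et \int_{t}^{T \wedge \tau(x,t)} \bigl(f(X_{s}(x,t),s) - \bar f(X_{s}(x,t),s)\bigr)\, ds
\]
coincides with the function $u_{f - \bar f}$ of Proposition \ref{norm:estimates}. Combining Remark \ref{to_norm_estimates} with the global Lipschitz estimate \eqref{warH1} in A3,
\[
\|\mathcal{T}u - \mathcal{T}\bar u\|_{\kappa} \leq \dfrac{M}{\sqrt[3]{\kappa}} \|f - \bar f\|_{\kappa}^{0} \leq \dfrac{MK}{\sqrt[3]{\kappa}} \|u - \bar u\|_{\kappa},
\]
so any $\kappa > \max(1, (MK)^{3})$ makes $\mathcal{T}$ a strict contraction with respect to $\|\cdot\|_{\kappa}$.
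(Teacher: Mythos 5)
Your argument follows the paper's proof essentially step for step: identify $\mathcal{T}u$ with the solution of the linear problem with source $f=H(D_{x}u,u,x,t)$ via Propositions \ref{approx} and \ref{Feynman}, split off the boundary contribution, bound the source contribution through Remark \ref{to_norm_estimates}, and obtain the contraction from the same remark combined with \eqref{warH1}; your threshold $\kappa>\max\{(MK)^{3},1\}$ is exactly the paper's. The only place you hedge --- the global Lipschitz bound in $x$ for the boundary part $u_{\beta}$ --- is precisely where the paper's argument is cleanest, and the $\sqrt{|x-\bar{x}|}$ loss you anticipate never occurs, because no It\^o-isometry estimate is needed: for $x\geq\bar{x}$ the comparison theorem gives $X_{T\wedge\tau(x,t)}(x,t)\geq X_{T\wedge\tau(\bar{x},t)}(\bar{x},t)\geq 0$ a.s.\ (the lower process is either absorbed at $0$ before time $T$ or dominated at time $T$), so by optional stopping of the bounded martingale
\[
\ee\left|X_{T\wedge\tau(x,t)}(x,t)-X_{T\wedge\tau(\bar{x},t)}(\bar{x},t)\right|=\ee X_{T\wedge\tau(x,t)}(x,t)-\ee X_{T\wedge\tau(\bar{x},t)}(\bar{x},t)=x-\bar{x},
\]
and monotonicity of $\tau(\cdot,t)$ together with A4) gives $\ee\left|(T\wedge\tau(x,t))-(T\wedge\tau(\bar{x},t))\right|\leq L|x-\bar{x}|$. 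Combining these two estimates with the Lipschitz property of $\beta$ from A2) yields the uniform-in-$t$ Lipschitz bound for $u_{\beta}$ directly; there is no residual stochastic-integral term on $[T\wedge\tau(\bar{x},t),T\wedge\tau(x,t)]$ to absorb, since one never compares the two trajectories at mismatched stopping times. With that step closed, your proof is complete and coincides with the one in the paper.
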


\begin{proof}
We have to prove that the operator $\mathcal{T}$ maps $\mathcal{C}^{1+,0+}_{b,loc}$ to $\mathcal{C}^{1+,0+}_{b,loc}$.
We fix the function $ u \in\mathcal{C}^{1+,0+}_{b,loc}$ and define
\[
w(x,t):= \mathcal{T}u (x,t).
\]
The function $w$ is bounded since $u$, $D_{x} u$, $\beta$ are bounded and the Hamiltonian $H$ satisfies a linear growth condition. Note that 
\[
w=w_{1}+w_{2},
\]
where
\begin{align*}
w_{1}(x,t):&= \et\left[\beta(X_{T\wedge \tau(x,t)}(x,t),T\wedge \tau(x,t))  \right], \\
w_{2}(x,t):&= \et\left[ \int_{t}^{T\wedge \tau(x,t)} H(D_{x} u(X_{s}(x,t),s),u(X_{s}(x,t),s),X_{s}(x,t),s)\; ds \right].
\end{align*}
The function $\beta$ is uniformly Lipschitz continuous with a constant $L_{\beta}$ and thus
\begin{align*}
&|w_{1}(x,t)-w_{1}(\bx,t)| \\ &\leq \ee \left|\beta(X_{T \wedge \tau(x,t)}(x,t),T \wedge \tau(x,t)) - \beta(X_{T \wedge \tau(\bx,t)}(\bx,t),T \wedge \tau(\bx,t)) \right| \\ &\leq L_{\beta} \ee \left| X_{T \wedge \tau(x,t)}(x,t) - X_{T \wedge \tau(\bx,t)}(\bx,t) \right| \\ &\quad \quad +  L_{\beta}\ee \left|(T \wedge \tau(x,t)) - (T \wedge \tau(\bx,t)) \right|. 
\end{align*}

Furthermore, we can assume that $x \geq \bx$. Since \[X_{T \wedge \tau(x,t)}(x,t) \geq X_{T \wedge \tau(\bx,t)}(\bx,t),\] we have
\begin{align*}
\ee \left| X_{T \wedge \tau(x,t)}(x,t) - X_{T \wedge \tau(\bx,t)}(\bx,t) \right|&=
\ee  X_{T \wedge \tau(x,t)}(x,t) -\ee X_{T \wedge \tau(\bx,t)}(\bx,t) \\ &= x-\bar{x} =|x-\bar{x}|.
\end{align*}
By the above and condition {\bf A4)}, we observe that the function $w_1$ is Lipschitz continuous in $x$ uniformly with respect to $t$. As a result, the derivative $D_{x}w_{1}$ is globally bounded.  
Remark \ref{to_norm_estimates} ensures that $D_{x}w_{2}$ is globally bounded. In summary, we get the boundedness of  
$D_{x}w$. Proposition \ref{approx}  and the Feynman-Kac formula (Proposition \ref{Feynman}) guarantee that both $w$ and $D_{x}w$ are H\"older continuous on compact subsets, and consequently $w$ belongs to $C_{b,loc}^{1+,0+}$.

Now, our objective is to prove that $\mathcal{T}$ is a contraction for sufficiently large $\kappa$. 
Fix $u, v \in \mathcal{C}^{1+,0+}_{b,loc}$ and define 
\[
\hat{w}(x,t):=\mathcal{T} u (x,t) - \mathcal{T} v (x,t).
\]  Note that, by Proposition \ref{approx} and the Feynman-Kac formula (Proposition \ref{Feynman}), the function $\hat{w}$ is a classical solution to 
\begin{equation*} 
 \begin{cases}
  D_{t} \hat{w}+ \tfrac{\sigma^{2}(x,t)}{2}D^{2}_{x}\hat{w} + f(x,t) =0,  &\quad (x,t) \in (0,+\infty) \times [0,T), \\
\hat{w}(x,t)=0, &\quad (x,t) \in  \partial_{p} \left((0,+\infty) \times [0,T)\right),
\end{cases}
\end{equation*}
where $f(x,t):=H(D_{x} u, u, x, t) -  H(D_{x} v, v, x, t)$.
By applying Remark \ref{to_norm_estimates}, we obtain that there exists a constant $M>0$ independent of $u$ and $v$, such that 
 \[ \|\hat{w}\|_{\kappa} \leq \dfrac{M}{\sqrt[3]{\kappa}} \| H(D_{x} u, u, x, t) -  H(D_{x} v, v, x, t)\|_{\kappa}^{0} \leq  \dfrac{K M}{\sqrt[3]{\kappa}} \|u-v\|_{\kappa}, \quad \kappa>1.
 \] 
We obtain a contraction for $\kappa > \max\{(KM)^{3},1 \}$.
\end{proof}

\begin{proof}[Proof of Theorem \ref{main}] The rest of the proof is analogous to the proof of Zawisza \cite[Theorem 2.2]{Zawisza1}, but we repeat it for the reader's convenience.
Reasoning is based on a fixed-point type argument for the mapping $\mathcal{T}$.
 We take any $u_{1} \in \mathcal{C}^{1+,0+}_{b,loc}$ and define recursively the sequence 
 \[u_{n+1}:=\mathcal{T} u_{n}, \quad n \in \mathbb{N}. \] There exists $\kappa>0$ such that the mapping $\mathcal{T}$ is a contraction in $\| \cdot \|_{\kappa}$ and this implies that the sequence $u_{n}$ converges to some fixed point $u$. The function $u$ belongs to $\mathcal{C}^{1,0}_{b}$ and we have to prove that $u$ also belongs to class $\mathcal{C}^{1+,0+}_{b,loc}$. First, let us note that the sequences $(u_{n}, n \in \mathbb{N})$ and $(D_{x}u_{n}, n \in \mathbb{N})$  converge in $\|\cdot\|_{\kappa}^{0}$ (for $\kappa$ large enough), so they are bounded uniformly with respect to $n$. We can now exploit (E8) and (E9) of Fleming and Rishel \cite{FlemingRishel} (together with the localization procedure to (E8) described therein) and prove the uniform bound on compact subsets for H\"{o}lder norm of $u_{n}$ and $D_{x} u_{n}$. More precisely, by the Feynman-Kac formula (Proposition \ref{Feynman})  and Proposition \ref{approx}, we know that
 \begin{multline*}
  D_{t}u_{n+1}+ \tfrac{\sigma^{2}(x,t)}{2}D^{2}_{x} u_{n+1}+  H(D_{x}u_{n}(x,t),u_{n}(x,t),x,t)=0,  \\ (x,t) \in (0,+\infty) \times [0,T),
 \end{multline*}
 and
 \[
u_{n+1}(x,t)=\beta(x,t), \quad (x,t) \in  \partial_{p} \left((0,+\infty) \times [0,T)\right).
 \]
Now, consider the set $\mathcal{O}_{k}:=(\frac{1}{k},k)\times (\frac{1}{k},T-\frac{1}{k})$ and a function $\psi_{k} \in \mathcal{C}^{2}$ such that 
$\psi_{k} =1$ on $O_{k}$ and $\psi_{k}=0$ on $O_{k+1}^{c}$ and define $\varphi_{n,k} = \psi_{k} u_{n+1}$. 
We have
\begin{multline} \label{zero}
  D_{t}\varphi_{n,k}+ \tfrac{\sigma^{2}(x,t)}{2}D^{2}_{x} \varphi_{n,k}  = -\psi_{k} H(D_{x}u_{n},u_{n},x,t) \\ + \sigma^{2}(x,t)D_{x} u_{n+1} D_{x} \psi_{k}+\tfrac{\sigma^{2}(x,t)}{2}u_{n+1} D^{2}_{x} \psi_{k} + u_{n+1} D_{t} \psi_{k}
\end{multline}
on $\mathcal{O}_{k+1} $ and $\varphi_{n,k}=0$ on $\partial_{p} \left(\mathcal{O}_{k+1}\right)$.
Note that the right hand side of \eqref{zero} is globally bounded and this means that E8) implies
that there exists a constant $B_{k,\lambda}>0$ such that $\|\varphi_{n,k}\|^{2}_{\lambda,\mathcal{O}_{k+1}} \leq B_{k,\lambda}$, $k,n \in \mathbb{N}$, $\lambda \in (1,+\infty)$. By E9), we get
\[
 \|\varphi_{n,k}\|_{\mathcal{C}^{1+l,l/2}(O_{k+1})}\leq \|\varphi_{n,k}\|^{(2)}_{\lambda,O_{k+1}} \leq B_{k,\lambda}, \quad \lambda > 3,\; l=1-\frac{3}{\lambda},\; k,n \in \mathbb{N}.
\]
Consequently,
\begin{multline*}
\|u_{n+1}\|_{\mathcal{C}^{1+l,l/2}(O_{k})}= \|\varphi_{n,k}\|_{\mathcal{C}^{1+l,l/2}(O_{k})}\leq \|\varphi_{n}\|_{\mathcal{C}^{1+l,l/2}(O_{k+1})}\leq B_{k,\lambda}, \\ \lambda > 3,\; l=1-\frac{3}{\lambda},\; k,n \in \mathbb{N}.
\end{multline*}
Therefore, for all $(x,t)$, $(x,\bar{t}) \in O_{k}$, $t \neq \bar{t}$ we have
\begin{multline*}
\frac{|u_{n+1}(x,t)-u_{n+1}(x,\bar{t})|}{|t-\bar{t}|^{l/2}}+\frac{|D_x u_{n+1}(x,t)-D_x u_{n+1}(x,\bar{t})|}{|t-\bar{t}|^{l/2}} \leq B_{k,\lambda}, \\ \lambda > 3,\; l=1-\frac{3}{\lambda},\; k, n \in \mathbb{N},
\end{multline*}
and for all $(x,t)$, $(\bx,t) \in O_{k}$, $x \neq \bar{x}$
\begin{multline*}
\frac{|u_{n+1}(x,t)-u_{n+1}(\bx,t)|}{|x-\bar{x}|^{l}}+\frac{|D_x u_{n+1}(x,t)-D_x u_{n+1}(\bx,t)|}{|x-\bar{x}|^{l}} \leq B_{k,\lambda}, \\ \lambda > 3,\; l=1-\frac{3}{\lambda},\; k,n \in \mathbb{N}.
\end{multline*}
For all $(x,t)$, $(x,\bar{t}) \in O_{k}$, $t \neq \bar{t}$, after passing to the limit as $n \to +\infty$, we obtain :
 \[
\frac{|u(x,t)-u(x,\bar{t})|}{|t-\bar{t}|^{l/2}}+\frac{|D_x u(x,t)-D_x u(x,\bar{t})|}{|t-\bar{t}|^{l/2}} \leq B_{k,\lambda}, \; \lambda > 3,\; l=1-\frac{3}{\lambda},
\]
and  for all $(x,t)$, $(\bx,t) \in O_{k}$, $x \neq \bar{x}$:
\[
\frac{|u(x,t)-u(\bx,t)|}{|x-\bar{x}|^{l}}+\frac{|D_x u(x,t)-D_x u(\bx,t)|}{|x-\bar{x}|^{l}} \leq B_{k,\lambda}, \; \lambda > 3,\; l=1-\frac{3}{\lambda}.
\]
Therefore, $u$ and $D_{x} u$ are H\"{o}lder continuous on compact subsets of the set $(0,+\infty) \times (0,T)$. In fact, we should require of them to be H\"{o}lder continuous on compact subsets of $(0,+\infty) \times [0,T)$. This, however, can be easily achieved by extended the definition of functions $\sigma$, $H$ and $\beta$, for $t \in [-1,0)$ in the following way:
\[
H(p,u,x,t):=H(p,u,x,0),\quad \beta(x,t):=\beta(x,0),\quad \sigma(x,t):=\sigma(x,0).
\]
Now we can repeat the proof verbatim for the set 
$\mathcal{O}_{k}:=(\frac{1}{k},k) \times (-1,T-\frac{1}{k})$.

Recall that $u$ is a fixed point and therefore,
\begin{align*}
u (x,t) := \et   \biggl [ & \beta(X_{T\wedge \tau(x,t)}(x,t),T\wedge \tau(x,t)) \\ &+  \int_{t}^{T\wedge \tau(x,t)} H(D_{x} u(X_{s}(x,t),s),u(X_{s}(x,t),s),X_{s}(x,t),s)\; ds \biggr ].
\end{align*}
  Proposition \ref{approx} and the Feynman-Kac formula (Proposition \ref{Feynman}) confirm that the fixed point $u$ belongs to the class \[\mathcal{C}^{2,1}((0,+\infty) \times [0,T)) \cap \mathcal{C}([0,+\infty) \times [0,T]) \] and satisfies equation \eqref{firsteq}.
\end{proof}

\begin{rem} \label{unique}  From the proof of Theorem \ref{main} (the contractive property of $\mathcal{T}$), we can infer the uniqueness of the solution to \eqref{second} within the class $\mathcal{C}^{1+,0+}_{b,loc}$. More precisely, if we have two classical solutions $u_{1}  \in \mathcal{C}^{1+,0+}_{b,loc} $ and $u_{2} \in \mathcal{C}^{1+,0+}_{b,loc}$, then $u_{1}\equiv u_{2}$.
\end{rem}

\begin{rem}
Note that we might consider another half--line (not starting at 0) as a domain of equation \eqref{firsteq}. We choose, for example, the interval $[x_{0},+\infty)$, $x_{0} \in \mathbb{R}$, then simple translation $u(x-x_{0})$ will reduce the problem to the domain $[0,+\infty)$, and consequently Theorem \ref{main} will still be valid for a Dirichlet problem in the domain $[x_{0},+\infty)$.
\end{rem}

\begin{rem}
The fixed point iteration method ensures exponential speed of convergence i.e. there exist  constants $C(T)>0$ and $q \in (0,1)$ such that  
\[
\|u-u_{n}\|^{0}+ \|D_{x}u-D_{x}u_{n}\|^{0} \leq C(T)q^{n},
\]
which additionally in a stochastic control context and under certain circumstances  imply exponential convergence for the control sequence $\alpha_{n}$ (cf Kermikulov et al. \cite[Assumption 2.4]{Kerimkulov}).
\end{rem}

\begin{rem}
It should be also noted that in the homogeneous case, by using estimates of the form \[\left|D_{x}\mathbb{E}\left[ f(X_{s}(x,t))\mathbf{1}_{\{\tau(x,t)>s\}} \right]\right| \leq \frac{K_T \sup_{x \in (0,+\infty)} |f(x)|}{\sqrt{s-t}}, \quad s>t{,}\; x>0 \] from  Fornaro et al. \cite{Fornaro} or finally Hieber et al. \cite{Hieber}, it is possible to extend the current results for the equations of the form 
\[  D_{t}u+ \frac{1}{2} \sigma^{2}(x) D^{2}_{x} u + b(x)D_{x} u + H(D_x u,u,x,t) =0, \quad (x,t) \in (0,+\infty) \times [0,T)
\]
and the associated diffusion
\[
dX_{s}=b(X_{s}) \;ds + \sigma(X_{s})\;dW_{s}
\]
with $\sigma$ and $b$ unbounded, but the cost we have to pay is a further regularity assumptions for coefficients $b$ and $\sigma$ and rather long list of additional other conditions. Observe also that when the coefficients $b$ and $\sigma$ are bounded, the term $b(x)D_{x} u$ can be incorporated into the function $H$ and therefore can be treated by our methodology.
\end{rem}

As was mentioned in Introduction, our primary concern is to consider the problem \eqref{second} for specific choice of the function $H$. Namely, we are interested in the control objective
\begin{multline} \label{objective:again}
\mathcal{J}^{\alpha} (x,t) := \ee_{x,t} \biggl[ \int_{t}^{T \wedge \tau^{\alpha}(x,t)} e^{\int_{t}^{s} h(X_{k}^{\alpha},k,\alpha_{k})\; dk } l(X_{s}^{\alpha},s,\alpha_{s})\; ds \\ + e^{\int_{t}^{T \wedge \tau^{\alpha}(x,t)} h(X_{k}^{\alpha},k,\alpha_{k})\; dk }\beta(X^{\alpha}_{T \wedge \tau^{\alpha}(x,t)} ,T \wedge \tau^{\alpha}(x,t))\biggr]
\end{multline}
and the value function $V(x,t):=\sup_{\alpha \in \mathcal{A}_{t}}\mathcal{J}^{\alpha}(x,t)$, where where $\mathcal{A}_{t}$ is an admissible control set and $\{X_{s}^{\alpha}\}_{s \geq t}$ is a strong solution to the initial value problem 
\[
\begin{cases}
dX_{s}^{\alpha}= b(X_{s}^{\alpha},s,\alpha_{s})\;ds + \sigma(X_{s}^{\alpha},s) \; dW_{s}{,} \\ 
X_{s}^{\alpha} = x.
\end{cases}
\] 
To determine an optimal strategy $\hat{\alpha}$ one should solve an HJB equation of the form
\begin{equation}  \label{equationL}  D_{t}u+ \tfrac{\sigma^{2}(x,t)}{2} D^{2}_{x} u  + H(D_{x}u,u,x,t) =0,  \quad (x,t) \in (0,+\infty) \times [0,T)
\end{equation}
with the boundary condition 
\[u(x,t)=\beta(x,t), (x,t) \in \partial_{p} \left((0,+\infty) \times [0,T) \right)\] and 
\begin{equation} \label{Hmax}
H(p,u,x,t):=\max_{\alpha \in A}  \left(b(x,t,\alpha)p +  h(x,t,\alpha) u + l(x,t,\alpha)\right).
\end{equation}
To ensure that all the conditions {\bf A1)}--{\bf A4)} are fulfilled, we propose the following assumptions that are linked to problem formulation \eqref{equationL} with $H$ given by formula \eqref{Hmax}.

 \begin{itemize}
\item[{\bf B1)}]
The coefficient $\sigma$ is  bounded, bounded away from zero and Lipschitz continuous in $(x,t)$,i.e., there exists a constant $L_{\sigma}>0$ such that for all $x,\bar{x} \in \mathbb{R}$, $t,\bar{t} \in [0,T]$
\[
|\sigma(x,t)-\sigma(\bar{x},\bar{t})|\leq L_{\sigma} \left(|x-\bar{x}|+ |t-\bar{t}| \right).
\]

\item[{\bf B2)}]  The function $\beta$ is bounded and Lipschitz continuous, i.e., there exists a constant $L_{\beta}>0$ such that for all $(x,t), (\bar{x},\bar{t}) \in \partial_{p} \left((0,+\infty) \times [0,T) \right)$ 
 \[
 |\beta(x,t)-\beta(\bar{x},\bar{t})|\leq L_{\beta}\left(|x-\bar{x}|+ |t-\bar{t}| \right).
 \]

\item[{\bf B3)}] The functions $l$, $h$, $b$  are continuous and bounded and there exists a constant $L>0$ such  that for all $\zeta=l,h,b$ and for all $\alpha \in A$, $(x,t),(\bar{x},\bar{t}) \in \mathbb{R} \times [0,T]$
\begin{equation*} 
|\zeta(x,t,\alpha)-\zeta(\bar{x},\bar{t},\alpha)|\leq L( |x- \bar{x}| + |t-\bar{t}|). \\  
\end{equation*}

\item[{\bf B4)}] There exists a constant $L>0$ such that for all $(x,t),(\bar{x},t) \in [0,+\infty) \times [0,T]$
 \begin{equation*}
\left| \ee (\tau(x,t) \wedge T) - \ee (\tau(\bar{x},t) \wedge T) \right | \leq L|x-\bar{x}|,
\end{equation*}
where
\[ \tau (x, t): = \inf \{s \geq t | \; X_{s}(x,t)\leq 0\}
\]
and 
\[
dX_{s}=\sigma(X_{s},s)\; dW_{s}.
\]
\end{itemize}

Note that {\bf B1)} mimics {\bf A1)}, {\bf B2)} mimics {\bf A2)}, {\bf B4)} mimics {\bf A4)}, and  it is further shown that {\bf B3)} imply {\bf A3)} for a specific $H$ choice. 
Now we can give an immediate consequence of Theorem \ref{main}. 
\begin{prop} \label{forH}
Assume that all conditions B1)--B4) are satisfied. Then there exists a classical solution $u \in \mathcal{C}^{2,1}((0,+\infty) \times [0,T)) \cap \mathcal{C}([0,+\infty) \times [0,T])$ to \eqref{equationL}, which is bounded together with $D_{x} u$. 
\end{prop}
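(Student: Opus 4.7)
The plan is to observe that Proposition \ref{forH} is an immediate application of Theorem \ref{main}, so all I need to do is verify that conditions \textbf{B1)}--\textbf{B4)} imply the corresponding conditions \textbf{A1)}--\textbf{A4)} for the specific Hamiltonian $H$ defined by \eqref{Hmax}. The implications \textbf{B1)} $\Rightarrow$ \textbf{A1)}, \textbf{B2)} $\Rightarrow$ \textbf{A2)}, and \textbf{B4)} $\Rightarrow$ \textbf{A4)} are trivial since the conditions are stated identically. Therefore the only real work is in showing that \textbf{B3)} implies \textbf{A3)} for the Hamiltonian in \eqref{Hmax}.

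For that, the key tool is the elementary inequality
\[
\Bigl|\max_{\alpha \in A} f(\alpha) - \max_{\alpha \in A} g(\alpha)\Bigr| \leq \max_{\alpha \in A} |f(\alpha) - g(\alpha)|,
\]
valid whenever both maxima are attained (which is guaranteed by compactness of $A$ and continuity from \textbf{B3)}). Applying this pointwise with $f(\alpha) = b(x,t,\alpha)p + h(x,t,\alpha)u + l(x,t,\alpha)$ and an analogous $g(\alpha)$ at $(\bar{p},\bar{u},\bar{x},\bar{t})$, and then using boundedness of $b,h$ together with joint Lipschitz continuity of $b,h,l$ in $(x,t)$ from \textbf{B3)}, yields for all $(p,u,x,t),(\bar{p},\bar{u},\bar{x},\bar{t})$ in a fixed compact set $U$ an estimate
\[
|H(p,u,x,t)-H(\bar{p},\bar{u},\bar{x},\bar{t})| \leq L_U\bigl(|p-\bar{p}|+|u-\bar{u}|+|x-\bar{x}|+|t-\bar{t}|\bigr),
\]
which is the first part of \textbf{A3)}.

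For the global estimates in \eqref{warH1}, I would set $K := \max\{\|b\|^{0},\|h\|^{0},\|l\|^{0}\}$, which is finite by boundedness in \textbf{B3)}. Then
\[
|H(p,u,x,t)| \leq \max_{\alpha \in A}\bigl(|b(x,t,\alpha)||p|+|h(x,t,\alpha)||u|+|l(x,t,\alpha)|\bigr) \leq K(1+|u|+|p|),
\]
and by the same max-difference inequality (keeping $x,t$ fixed),
\[
|H(p,u,x,t)-H(\bar{p},\bar{u},x,t)| \leq \max_{\alpha \in A}\bigl(|b(x,t,\alpha)||p-\bar{p}|+|h(x,t,\alpha)||u-\bar{u}|\bigr) \leq K(|p-\bar{p}|+|u-\bar{u}|).
\]
With \textbf{A1)}--\textbf{A4)} all verified, Theorem \ref{main} applies directly and yields the required bounded classical solution $u\in\mathcal{C}^{2,1}((0,+\infty)\times[0,T))\cap\mathcal{C}([0,+\infty)\times[0,T])$ with bounded derivative $D_x u$. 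I do not foresee any serious obstacle here; the statement of \textbf{B3)} was evidently engineered to make the max-Lipschitz step automatic, so the proof is essentially bookkeeping.
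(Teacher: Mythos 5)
Your proof is correct and follows essentially the same route as the paper: reduce to Theorem \ref{main} by noting that \textbf{B1)}, \textbf{B2)}, \textbf{B4)} restate \textbf{A1)}, \textbf{A2)}, \textbf{A4)}, and verify \textbf{B3)} $\Rightarrow$ \textbf{A3)} via the inequality $|\max_{\alpha}f-\max_{\alpha}g|\leq\max_{\alpha}|f-g|$ together with the boundedness and Lipschitz continuity of $b,h,l$. Your write-up is in fact slightly more detailed than the paper's, which leaves the constant bookkeeping implicit.
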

\begin{proof}
It is sufficient to prove that the condition {\bf B3)} implies {\bf A3)}, when 
\[
H(p,u,x,t):=\max_{\alpha \in A}h(\alpha,p,u,x,t)
\]
and
\[ h(\alpha,p,u,x,t):= b(x,t,\alpha)p +  h(x,t,\alpha) u + l(x,t,\alpha).
\]
This implication follows from the fact that the functions $b$, $h$, $l$ are bounded and the following inequality holds
\begin{equation*}
|\max_{\alpha \in A}h(\alpha,p,u,x,t) - \max_{\alpha \in A}h(\alpha,\bar{p},\bar{u},\bar{x},\bar{t})|  \leq \max_{\alpha \in A}|h(\alpha,p,u,x,t) - h(\alpha,\bar{p},\bar{u},\bar{x},\bar{t})|.
\end{equation*}

\end{proof}
To complete the task of finding the solution to the control problem  \[V(x,t)=\sup_{\alpha \in \mathcal{A}} \mathcal{J}^{\alpha}(x,t)\] it is now sufficient to find any Borel measurable selector and apply the standard verification result. The existence of a Borel selector is guaranteed by the compactness of the set $A$ and continuity of the function 
\[h(\alpha,p,u,x,t):= b(x,t,\alpha)p +  h(x,t,\alpha) u + l(x,t,\alpha).\] For this purpose, the Kuratowski–Ryll-Nardzewski result can be applied (see Wagner \cite[Theorem 3.1]{Wagner}).

\section{Lipschitz continuity for expected terminated hitting time}

Now, we provide sufficient conditions under which point A4) is satisfied. We have the following proposition.

\begin{prop} \label{condition:t} Suppose that $\sigma \in C^{1}(\mathbb{R})$ is bounded and bounded away from zero, its derivative $D_{x} \sigma$ is  bounded and locally Lipschitz continuous. Then the stopping time
 $\tau(x,t):=\{s\geq t|\; X_{s}(x,t) \leq 0\}$, where 
\[
dX_{s}=\sigma(X_{s})\;dW_{s}, \quad X_{t}=x,
\]
 satisfies condition \eqref{property:L}.
\end{prop}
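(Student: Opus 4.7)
The plan is to combine a linear boundary bound for $u(x,t):=\mathbb{E}[\tau(x,t)\wedge T]-t$ with a synchronous coupling argument. By Proposition~\ref{Feynman} and standard parabolic theory applied to the Dirichlet problem $D_{t}u+\tfrac{\sigma^{2}(x)}{2}D_{x}^{2}u+1=0$ with $u|_{\partial_{p}}=0$, this $u$ is the unique bounded classical $\mathcal{C}^{2,1}$ solution, and by the Mean Value Theorem the target estimate \eqref{property:L} is equivalent to a global Lipschitz bound for $u$ in $x$, uniform in $t$.

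The central analytic step is the pointwise inequality
\[
u(y,s)\leq Ky\qquad\text{for all }y\geq 0,\ s\in[0,T],
\]
which I will refer to as $(\star)$. To obtain $(\star)$ I would apply the Lamperti transformation $F(x):=\int_{0}^{x}d\zeta/\sigma(\zeta)$ to define $Y_{s}:=F(X_{s})$. It\^o's formula, legitimate by $\sigma\in\mathcal{C}^{1}$ with $\sigma'$ bounded and locally Lipschitz, gives $dY_{s}=dW_{s}+b(Y_{s})\,ds$ where $b:=-\tfrac{1}{2}\sigma'\circ F^{-1}$ is bounded by some $M$. Since $F$ is bi-Lipschitz with $F(0)=0$, one has $\tau_{X}(x,t)=\tau_{Y}(F(x),t)$. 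The pathwise sandwich $Y_{s}\leq y+(W_{s}-W_{t})+M(s-t)$ yields the event inclusion $\{\tau_{Y}(y,t)>r\}\subseteq\{y+(W_{u}-W_{t})+M(u-t)>0\ \forall u\in[t,r]\}$, so the classical closed-form for the running minimum of drifted Brownian motion (Karatzas--Shreve) gives
\[
P(\tau_{Y}(y,t)>r)\leq\Phi\!\left(\tfrac{y+M(r-t)}{\sqrt{r-t}}\right)-e^{-2My}\Phi\!\left(\tfrac{-y+M(r-t)}{\sqrt{r-t}}\right).
\]
Differentiating the right-hand side in $y$ produces an expression bounded by $\sqrt{2/(\pi(r-t))}+2M$ uniformly in $y\geq 0$; since the right-hand side vanishes at $y=0$, another application of the Mean Value Theorem upgrades this to $P(\tau_{Y}(y,t)>r)\leq(\sqrt{2/(\pi(r-t))}+2M)y$ globally in $y$. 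Integrating over $r\in[t,T]$ via $u_{Y}(y,t)=\int_{t}^{T}P(\tau_{Y}(y,t)>r)\,dr$ and using $F(x)\leq x/\sigma_{\min}$ establishes $(\star)$ with $K=\sigma_{\min}^{-1}(2\sqrt{2T/\pi}+2MT)$.

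The second ingredient is a synchronous coupling. Fix $x\geq\bar{x}\geq 0$ and drive both $X(x,t)$ and $X(\bar{x},t)$ by the same Brownian motion. By the Ikeda--Watanabe comparison theorem, $X_{s}(x,t)\geq X_{s}(\bar{x},t)$ a.s., hence $\tau(x,t)\geq\tau(\bar{x},t)$ a.s. Put $\Delta_{s}:=X_{s}(x,t)-X_{s}(\bar{x},t)\geq 0$ and $\alpha_{s}:=(\sigma(X_{s}(x,t))-\sigma(X_{s}(\bar{x},t)))/\Delta_{s}$ on $\{\Delta_{s}>0\}$, extended by $\sigma'(X_{s}(x,t))$ elsewhere; then $|\alpha_{s}|\leq\|\sigma'\|_{\infty}$ and $d\Delta_{s}=\alpha_{s}\Delta_{s}\,dW_{s}$, so $\Delta$ is a nonnegative, uniformly integrable martingale on $[t,T]$. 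Decomposing
\[
(\tau(x,t)\wedge T)-(\tau(\bar{x},t)\wedge T)=\mathbf{1}_{\{\tau(\bar{x},t)<T\}}\bigl((\tau(x,t)\wedge T)-\tau(\bar{x},t)\bigr),
\]
applying the strong Markov property of $X(x,t)$ at $\tau(\bar{x},t)$ (at which moment $X_{\tau(\bar{x},t)}(x,t)=\Delta_{\tau(\bar{x},t)}$), invoking $(\star)$, and finishing with optional stopping for $\Delta$ at the bounded stopping time $\tau(\bar{x},t)\wedge T$, I conclude
\[
\mathbb{E}[\tau(x,t)\wedge T]-\mathbb{E}[\tau(\bar{x},t)\wedge T]\leq K\,\mathbb{E}[\Delta_{\tau(\bar{x},t)\wedge T}]=K(x-\bar{x}),
\]
which is the claimed inequality; the case $\bar{x}\geq x$ is symmetric.

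The main obstacle is $(\star)$. The uniformity of the linear bound in $y\in[0,\infty)$, and not merely as $y\to 0^{+}$, is what makes the closed-form survival probability of drifted Brownian motion essential; a purely local barrier argument near $y=0$ does not suffice. The additional regularity of $\sigma'$ beyond A1 is consumed precisely at the Lamperti step to ensure that the drift $b$ of the transformed equation is bounded.
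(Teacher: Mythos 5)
Your argument is correct, and it reaches the conclusion by a genuinely different route than the paper. The paper's proof runs: reflection principle for pure Brownian motion (Step I) $\to$ bootstrap to the drifted case $dX_s=b(X_s)\,ds+dW_s$ by invoking the full strength of Theorem~\ref{main} with $H(p,u,x,t)=b(x)p+1$, which delivers a globally bounded $D_xu$ (Step II) $\to$ Lamperti transform (Step III). You instead perform the Lamperti reduction first and then stay entirely on the probabilistic side: the closed-form law of the running minimum of drifted Brownian motion gives the linear barrier bound $(\star)$, $u(y,s)\leq Ky$, and the synchronous coupling --- comparison theorem, strong Markov property at $\tau(\bar{x},t)$, and optional stopping for the nonnegative martingale $\Delta_s=X_s(x,t)-X_s(\bar{x},t)$ --- upgrades that boundary bound to the global Lipschitz estimate. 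What your approach buys is self-containedness (no appeal to the nonlinear fixed-point machinery of Theorem~\ref{main}, nor to a drifted extension of Proposition~\ref{Feynman}) and an explicit constant $K=\sigma_{\min}^{-1}\bigl(2\sqrt{2T/\pi}+\|D_x\sigma\|_{\infty}T\bigr)$; what the paper's route buys is brevity, since the global gradient bound comes for free once Theorem~\ref{main} is available. Two cosmetic remarks: your opening appeal to parabolic theory and the Mean Value Theorem is superfluous, because \eqref{property:L} \emph{is} by definition the Lipschitz property of $u(x,t)=\mathbb{E}[\tau(x,t)\wedge T]-t$, which you then prove directly; and the representation $d\Delta_s=\alpha_s\Delta_s\,dW_s$ is not needed --- $\Delta$ is a true martingale simply as the difference of two It\^{o} integrals with bounded integrands, which is all that optional stopping at the bounded time $\tau(\bar{x},t)\wedge T$ requires.
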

\begin{proof}
The proof consists of three steps.
In the first step, we establish the assertion for a \emph{pure} Brownian motion, i.e., with no drift and constant volatility $\sigma \equiv 1$.
In the second step, we use this property together with Theorem~\ref{main} to prove the assertion for a diffusion process with drift. In the third step, we apply a change-of-variable technique to transfer the results from the diffusion \( dX_s = b(X_s)\,ds + dW_s \) to the case \( dX_s = \sigma(X_s)\,dW_s \).

{\bf Step I}
First,
we consider a trivial dynamics of the form
\[
dX_{s}=\; dW_{s}.
\] 
Note that  
\begin{align*}
\ee (\tau(x,t) \wedge T) &= \int_{t}^{T} P(\tau(x,t) >s)\;ds = \int_{t}^{T}[1-P(\tau(x,t) \leq s)]\;ds \\&=\int_{t}^{T}\left[1-2P(W_{s}-W_{t}>-x)\right]\;ds,\quad x \geq 0,
\end{align*}
where the last equality is guaranteed by the reflection principle.
Therefore, for $x>\bar{x} \geq 0$ we have
\begin{multline*}
\left|  \ee (\tau(x,t) \wedge T) - \ee (\tau(\bar{x},t)  \wedge T)\right| = \int_{t}^{T} \int_{\bx}^{x}\dfrac{1}{2\pi\sqrt{s-t}} e^{-\dfrac{z^{2}}{2(s-t)}} dz ds \\ \leq |x-\bar{x}| \int_{t}^{T} \dfrac{1}{2\pi\sqrt{s-t}}ds  \leq \dfrac{\sqrt{T}}{\sqrt{\pi}} |x-\bar{x}|. 
\end{multline*}

{\bf Step II}
Consider now a SDE of the form
\begin{equation*} \label{with:drift}
dX_{s}=b(X_{s})\;ds + dW_{s},
\end{equation*}
where the function $b$ is Lipschitz continuous on compact subsets of $\mathbb{R}$ and bounded. Step I ensures that we may apply Theorem \ref{main} to  prove that the equation 
\begin{equation*} 
 \begin{cases}
  D_{t}u+ \frac{1}{2}D^{2}_{x}u + b(x)D_{x} u +1=0,  &\quad (x,t) \in (0,+\infty) \times [0,T), \\
u(x,t)=0,  &\quad (x,t) \in \partial_{p} \left((0,+\infty) \times [0,T)\right)
\end{cases}
\end{equation*}
admits a bounded classical solution $u$ with a bounded derivative $D_{x} u$. The   Feynman--Kac representation (Proposition \ref{Feynman}) ensures that $u(x,t)= \ee (\tau(x,t) \wedge T)  -t$. 

{\bf Step III} (The classical Lamperti transform) Let finally  $\sigma$ be a function satisfying the assumptions of the current proposition. Consider the dynamics 
\[
dY_{s}=-\frac{1}{2} D_{x} \sigma(\zeta^{-1}(Y_{s}))\; ds +\;dW_{s},
\]
where
\[
\zeta(x):=\int_{0}^{x}\frac{1}{\sigma(z)}\; dz.
\]
Without loss of generality, we can assume that $\sigma$ is positive. Note that, $\zeta \in \mathcal{C}^{2}((0,+\infty))$, $\zeta$ is increasing  and $\zeta(x)=0$ if and only if $x=0$.
By the It\^{o} formula, we get
\[
d\zeta^{-1}(Y_{s})= D_{x}\zeta^{-1}(Y_{s})\left[-\frac{1}{2} D_{x} \sigma(\zeta^{-1}(Y_{s}))\; ds +\;dW_{s}\right] + \frac{1}{2} D_{x}^{2} \zeta^{-1}(Y_{s})\; ds.
\]
Note that $D_{x}\zeta^{-1}(x)=\sigma(\zeta^{-1}(x))$ and $D_{x}^{2} \zeta^{-1}(x)=\sigma(\zeta^{-1}(x)) D_{x} \sigma(\zeta^{-1}(x)) $, and thus
\[
d\zeta^{-1}(Y_{s})= \sigma(\zeta^{-1}(Y_{s})) \; dW_{s},
\]
 which implies that $\{X_{s}(x,t)\}_{s \geq t}=\{\zeta^{-1}(Y_{s}(\zeta(x),t))\}_{s \geqslant t}$ is a unique strong solution to 
\[
dX_{s}=\sigma(X_{s})\; dW_{s}.
\]
Furthermore,
\begin{align*}
\{s \geq t|\; Y_{s}(\zeta(x),t) \leq 0 \}&= \{s \geq t|\; \zeta^{-1}(Y_{s}(\zeta(x),t)) \leq 0 \}\\ &=\{s \geq t |\;X_{s}(x,t)\leq 0 \}.
\end{align*}
By Step II, condition  \eqref{property:L} is satisfied for the process $\{Y_{s}(\zeta(x),s)\}_{s\geq t}$ and using the fact that $\zeta $ is a Lipschitz continuous function, we get the same for the process $X$.

\end{proof}

\begin{prop} \label{to:gamma} In addition to assumptions of Proposition \ref{condition:t} suppose that the function $\gamma:[0,T] \to \mathbb{R}$ is continuous and bounded away from zero. Then the stopping time $\tau(x,t):=\inf \{ s \geq t|\; X_{s}(x,t)\leq 0\}$, where 
\[
dX_{s}=\sigma(X_{s}) \gamma(s) \; dW_{s}, \quad X_{t}=x,
\]
satisfies condition \eqref{property:L}.
\end{prop}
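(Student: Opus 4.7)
The plan is to reduce the problem to Proposition \ref{condition:t} via a deterministic time change that absorbs the factor $\gamma$. Since $\gamma$ is continuous on the compact interval $[0,T]$ and bounded away from zero, setting $c:=\inf_{r\in[0,T]}\gamma^{2}(r)>0$ and
\[A(s):=\int_{t}^{s}\gamma^{2}(r)\,dr, \qquad s\in[t,T],\]
defines a $C^{1}$, strictly increasing bijection from $[t,T]$ onto $[0,T']$ with $T':=A(T)$, whose inverse $A^{-1}$ is Lipschitz continuous with constant $1/c$.

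By the classical time-change formula for continuous local martingales (Dambis--Dubins--Schwarz), the process $Y_{u}:=X_{A^{-1}(u)}$, $u\in[0,T']$, satisfies
\[dY_{u}=\sigma(Y_{u})\,d\tilde{W}_{u}, \qquad Y_{0}=x,\]
for a Brownian motion $\tilde{W}$ on a possibly enlarged filtration. Since $X_{s}=Y_{A(s)}$ and $A$ is a monotone bijection, the corresponding hitting time $\tilde{\tau}(x):=\inf\{u\geq 0 \mid Y_{u}(x)\leq 0\}$ is related to $\tau(x,t)$ by the identity $\tau(x,t)\wedge T=A^{-1}(\tilde{\tau}(x)\wedge T')$, where the event $\{\tilde{\tau}(x)\geq T'\}$ corresponds to $\{\tau(x,t)\geq T\}$. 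Proposition \ref{condition:t} applies verbatim to $Y$ (the hypotheses on $\sigma$ are unchanged), yielding a constant $\tilde{L}>0$ with
\[|\mathbb{E}(\tilde{\tau}(x)\wedge T')-\mathbb{E}(\tilde{\tau}(\bar{x})\wedge T')|\leq \tilde{L}|x-\bar{x}|.\]

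To pass this estimate through $A^{-1}$, I drive $Y(x)$ and $Y(\bar{x})$ by the same $\tilde{W}$; the one-dimensional comparison theorem (valid under the $C^{1}$ regularity of $\sigma$ assumed in Proposition \ref{condition:t}) gives $Y_{u}(x)\geq Y_{u}(\bar{x})$ a.s.\ whenever $x\geq\bar{x}$, hence $\tilde{\tau}(x)\geq\tilde{\tau}(\bar{x})$ a.s. Since $A^{-1}$ is increasing and $(1/c)$-Lipschitz, after taking expectations one obtains
\[0\leq \mathbb{E}(\tau(x,t)\wedge T)-\mathbb{E}(\tau(\bar{x},t)\wedge T)\leq \tfrac{1}{c}\bigl(\mathbb{E}(\tilde{\tau}(x)\wedge T')-\mathbb{E}(\tilde{\tau}(\bar{x})\wedge T')\bigr)\leq \tfrac{\tilde{L}}{c}|x-\bar{x}|,\]
with the symmetric inequality finishing the proof. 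The main obstacle I anticipate is not the final estimate itself but the careful bookkeeping of the time-change identification of the hitting times (in particular the boundary event $\{\tilde{\tau}(x)=T'\}$) and a clean invocation of Dambis--Dubins--Schwarz on an appropriate filtration; once these technicalities are settled, the synchronous coupling together with the Lipschitz property of $A^{-1}$ delivers the estimate in essentially one line.
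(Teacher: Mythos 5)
Your argument is correct, but it takes a genuinely different route from the paper. The paper stays entirely inside its PDE framework: it solves the Cauchy--Dirichlet problem $D_{t}u+\tfrac12\sigma^{2}(x)D^{2}_{x}u+\gamma(t)^{-2}=0$ on $(0,+\infty)\times[T_{0},T)$ via Theorem \ref{main}, performs the deterministic change of time variable $v(x,t)=u\bigl(x,T-\int_{t}^{T}\gamma^{2}(s)\,ds\bigr)$ so that $v$ solves the equation with diffusion coefficient $\sigma^{2}(x)\gamma^{2}(t)$ and source $1$, and then reads off $v(x,t)=\mathbb{E}(T\wedge\tau(x,t))-t$ from the Feynman--Kac representation; the Lipschitz bound is inherited from the gradient bound on $u$. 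You instead perform the same time change at the level of the process (and since $A$ is deterministic you do not need the full Dambis--Dubins--Schwarz machinery --- L\'evy's characterization applied to $\tilde{W}_{u}=\int_{t}^{A^{-1}(u)}\gamma(r)\,dW_{r}$ suffices), identify $\tau(x,t)\wedge T=A^{-1}(\tilde{\tau}(x)\wedge T')$, and transfer the Lipschitz estimate through the $(1/c)$-Lipschitz increasing map $A^{-1}$ using the synchronous coupling and the Ikeda--Watanabe comparison theorem, exactly as the paper does elsewhere. Your route is more elementary in that it reuses Proposition \ref{condition:t} as a black box and avoids a second pass through Theorem \ref{main}; the paper's route keeps everything in the analytic framework it has already built. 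One point you should tighten: the terminal time $T'=\int_{t}^{T}\gamma^{2}(r)\,dr$ depends on $t$, so the constant $\tilde{L}$ you extract from Proposition \ref{condition:t} a priori varies with $t$, whereas condition \eqref{property:L} requires a single constant. This is easily repaired by time homogeneity of $Y$: with $T'_{\max}:=\int_{0}^{T}\gamma^{2}(r)\,dr$ one has $\mathbb{E}(\tilde{\tau}(x,0)\wedge T')=\mathbb{E}(\tilde{\tau}(x,T'_{\max}-T')\wedge T'_{\max})-(T'_{\max}-T')$, and Proposition \ref{condition:t} applied with horizon $T'_{\max}$ gives a Lipschitz constant uniform over all starting times, hence over all $t$ --- but this step should be stated explicitly.
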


\begin{proof}
It is sufficient to consider first the dynamics
\[
dX_{s}=\sigma(X_{s}) \; dW_{s}
\]
and the problem
\[
\begin{cases}
  u_{t}+ \frac{1}{2}\sigma^{2}(x) D^{2}_{x}u +\frac{1}{(\gamma(t))^{2}}=0,  &\quad (x,t) \in (0,+\infty)\times [T_0,T), \\
u(x,t)=0,  &\quad (x,t) \in \partial_{p} \left((0,+\infty) \times [T_0,T)\right)
\end{cases}
\]
with $T_{0}:=T-\int_{0}^{T}(\gamma(s))^{2} \;ds$. Let $u$ denote the unique classical solution to the above equation (it exists according to Theorem \ref{main}) and define a new function of the form
\[
v(x,t)=u\left(x,T-\int_{t}^{T}(\gamma(s))^{2}\;ds\right).
\]
The function $v$ satisfies the equation
\[
\begin{cases}
  D_{t} v+ \frac{1}{2}\sigma^{2}(x) (\gamma(t))^{2} D^{2}_{x}v +1=0,  &\quad (x,t) \in (0,+\infty) \times [0,T), \\
v(x,t)=0,  &\quad (x,t) \in \partial_{p} \left((0,+\infty) \times [0,T)\right).
\end{cases}
\]
From the Feynman-Kac representation (Proposition \ref{Feynman}) we know that
\[
v(x,t)=\mathbb{E} (T\wedge  \tau(x,t))-t,
\] 
where 
$\tau(x,t):=\inf \{ s \geq t|\; X_{s}(x,t)\leq 0\}$
and 
\[
dX_{s}=\sigma(X_{s}) \gamma(s) \; dW_{s}, \quad X_{t}=x.
\]

Theorem \ref{main} implies that there exists a constant $L>0$ such that for all $(x,t),(\bx,t) \in [0,+\infty)\times [0,T]$ we have
\begin{align*}
|\mathbb{E}& (T\wedge  \tau(x,t))-\mathbb{E} (T\wedge  \tau(\bx,t))|=|v(x,t)-v(\bx,t)| \\ &=\left|u\left(x,T-\int_{t}^{T}(\gamma(s))^2 \;ds \right)-u\left(\bx,T-\int_{t}^{T}(\gamma(s))^2 \;ds \right)\right|\leq L|x-\bx|.
\end{align*}
\end{proof}

\section*{Appendix}

\subsection{Basic definitions}
\begin{defin}
Let $\mathcal{O} \subset \mathbb{R}^{n}$. We say that a function $f:\mathcal{O} \to \mathbb{R}$ is H\"older continuous on compact subsets if, for every compact set $U \subset O$, there exist a constant 
$L_{U}>0$ and a H\"older exponent $l_{U} \in (0,1]$ such that for all $x,\bx \in U$
\[
|f(x)-f(\bx)| \leq L_{U} \|x-\bx\|^{l_{U}}.
\] 
\end{defin}

\subsection{Norms in spaces of differential functions}
Let $O \times (T_{1},T_{2}) \subset \mathbb{R}^2$ be an open and bounded set. Suppose that $u:O \times (T_{1},T_{2}) \to \mathbb{R}$ is a sufficiently regular function. We use $D_{x}u$, $D_{t}u$, to  denote the  partial derivatives with respect to  $x \in O$ and $t \in (T_{1},T_{2})$ respectively, and $D^{2}_{x} u$ for the second order partial derivative with respect to $x \in O$. For the subset $G \times I \subset O \times (T_{1},T_{2})$, and parameters $l \in (0,1]$, $\lambda \in (1,+\infty)$, we also define
\begin{align*}
&\|u\|_{\mathcal{C}(G \times I)}:=\sup_{(x,t)\in G \times I} |u(x,t)|, \\
&\|u\|_{\lambda,\, G \times I}:= \left[\int_{G} \int_{I} |u(x,t)|^{\lambda}\; dt\; dx \right]^{\frac{1}{\lambda}},\\
 &\begin{aligned} \|u\|_{C^{l,l/2}(G \times I)} 
  :=  \|&u\|_{\mathcal{C}(G \times I)} 
   + \sup_{t \in I,\, x \neq y,\, x,y \in G} 
  \frac{|u(x,t)-u(y,t)|}{|x-y|^{l}} \\ &+ \sup_{x \in G,\, t \neq s, \, s,t \in I} 
  \frac{|u(x,t)-u(x,s)|}{|t-s|^{l/2}},
  \end{aligned} \\
  &\begin{aligned}[t]\|u\|_{\mathcal{C}^{1+l,l/2}(G \times I)} 
  :=\|&u\|_{\mathcal{C}(G \times I)} 
  + \|D_{x}u\|_{\mathcal{C}(G \times I)} 
   +\sup_{x \in G,\, t \neq s,\, s,t \in I} 
  \frac{|u(x,t) -  u(x,s)|}{|t-s|^{l/2}} 
   \\
  &+ \sup_{t \in I,\, x \neq y,\, x,y \in G}\frac{|D_x u(x,t) - D_x u(y,t)|}{|x-y|^{l}} \\ &+ \sup_{x \in G,\, t \neq s,\, s,t \in I} 
  \frac{|D_x u(x,t) - D_x u(x,s)|}{|t-s|^{l/2}},
  \end{aligned} \\  
 &\begin{aligned}[t]  \|u\|_{\mathcal{C}^{2+l,1+l/2}(G \times I)} 
   : =\|&u\|_{\mathcal{C}(G \times I)} 
  +  \|D_{x}u\|_{\mathcal{C}(G \times I)}
  + \|D_{x}^{2}u\|_{\mathcal{C}(G \times I)}
  + \|D_{t}u\|_{\mathcal{C}(G \times I)} \\
  & + \sup_{t \in I,\, x \neq y,\, x,y \in G} 
  \frac{|D^{2}_{x} u(x,t)-D^{2}_{x} u(y,t)|}{|x-y|^l} \\ &+ \sup_{x \in G,\, t \neq s,\, s,t \in I}  
  \frac{|D^{2}_{x} u(x,t)-D^{2}_{x} u(x,s)|}{|t-s|^{l/2}} \\
  & + \sup_{t \in I,\, x \neq y,\, x,y \in G} 
  \frac{|D_t u(x,t)-D_t u(y,t)|}{|x-y|^l} \\ &+ \sup_{x \in G,\, t \neq s,\, s,t \in I}  
  \frac{|D_t u(x,t)-D_t u(x,s)|}{|t-s|^{l/2}},
  \end{aligned}\\
  &\|u\|_{\lambda,\, G \times I}^{(2)}:=\|u\|_{\lambda,\, G \times I}+\|D_x u\|_{\lambda,\, G \times I}+\|D_x^{2} u\|_{\lambda,\, G \times I}+\|D_t u\|_{\lambda,\, G \times I}.
 \end{align*}
Further notations in this direction are introduced on page \pageref{page}, as they require contextual explanation. 
 
\subsection{Probabilistic framework} 
In the  paper we frequently use a stochastic representation of solutions to certain partial differential equations. In such cases we work with stochastic differential equations (either controlled or uncontrolled) of the form 
\[
dX_{s}=b(X_{s},s,\cdot)\; ds + \sigma(X_{s},s,\cdot)\; dW_{s}.
\]
Whenever such an equation appears, it is understood to be considered together with a reference probability space $(\Omega,\mathcal{F},\{\mathcal{F}_{s}\}_{s \geq 0},P)$, where $\{\mathcal{F}_{s}\}_{s \geq 0}$ is the $P$-augmented filtration generated by a one--dimensional Brownian motion $\{W_{s}\}_{s \geq 0}$. The unique strong solution of the initial value problem
\[
dX_{s}=b(X_{s},s,\cdot)\; ds + \sigma(X_{s},s,\cdot)\; dW_{s}, \quad X_{t}=x,
\]
(assuming that the coefficients $b$ and $\sigma$ allow for such a solution) is denoted by $\{X_{s}(x,t)\}_{s \geq t}$. Note that the form of the equation strictly depends on the specific problem under consideration. The symbol $\mathbb{E}[\cdot]$ is used to denote the expected value with respect to the reference probability measure $P$.

\section*{Acknowledgements}
The author would like to thank all referees for a careful and patient reading of the manuscript, as well as for their helpful suggestions.

\end{document}